\newtheorem{theorem}{Theorem}
\newtheorem{definition}{Definition}
\begin{document}

\title[Generalized Finite Difference Method for Solving Stochastic
Diffusion Equations]{Generalized Finite Difference Method for Solving Stochastic
Diffusion Equations}

\author{Faezeh Nassajian Mojarrad}
\affil{Max Planck Institute for Informatics, Germany}

\abstract{
Stochastic diffusion equations are crucial for modeling a range of physical phenomena influenced by uncertainties. We introduce the generalized finite difference method for solving these equations. Then, we examine its  consistency, stability and convergence in  mean-square, showing that   the proposed method preserves stability and demonstrates favorable convergence characteristics under suitable assumptions.
In order to validate the methodology, we present numerical results in one-, two-, and three-dimensional space domains.
}

\keywords{Stochastic partial differential equation, Generalized finite difference method, Meshless schemes, Numerical analysis, Convergence}

\maketitle

\section{Introduction}
Dynamical systems in science and engineering are most commonly modeled using ordinary differential equations (ODEs).
Real-world phenomena often involve inherent uncertainties.
We can describe these uncertainties using random processes, which allows the incorporation of randomness into ODE models, leading to the concept of stochastic differential equations (SDEs).
Several numerical methods have been developed for solving SDEs, see e.g.
\cite{Kloeden1992,Buckwar2007,Shahmoradi2021,Li2023,sde2024}.

SDEs describe the evolution of random processes over time, whereas stochastic partial differential equations (SPDEs) expand this framework to include spatial dimensions.
SPDEs have been extensively employed to model various applications in mathematical sciences, physics, biology and engineering.
There exists an enormous literature on
solving SPDEs, see e.g. \cite{Cui2017,Anton2020,Chen2020,Jentzen2020,Yang2021,Zhao2020,Nassajian2024,Jentzen2011}.

One of the oldest and simplest methods used for solving flow and heat transfer problems, is the finite difference method \cite{Courant1928}, particularly when the  domain is not complex.
However, this classic method remained limited by the necessity of using regular meshes.
In \cite{Liszka1980} has been developed a finite difference on irregular regions. Then, in \cite{Benito2001} an explicit form of the meshless finite difference method,
 called the generalized finite difference method, was presented. The generalized finite difference method  derived from the finite difference method, enables its application to scattered points and irregular areas.
This method is based on Taylor expansion and the moving least squares method.
In this method, partial derivatives are approximated using a combination of function values and weighting coefficients at each nodal point and its neighbors.
Due to the characteristics of meshless methods, this approach is highly flexible for solving partial differential equations and can be easily applied to irregular domains.
That is why in recent years it has been utilized to address various numerical problems across different scientific disciplines, for example, \cite{Fan2014,Li2017,Fu2019,Li2021,Izadian2013}.

The remainder of this paper is organized as follows. Firstly, the stochastic generalized finite difference scheme is introduced
in Section \ref{sec:methodology}.
In Section \ref{sec:stability}, we
analyze the consistency, stability and convergence of the proposed method.
Numerical examples  are presented in Section \ref{sec:numerics}. In Section \ref{sec:conclusion}, we  discuss our results, and outline potential directions for future research.

\section{Stochastic Generalized Finite Difference scheme}
\label{sec:methodology}
We are interested in the solution of the stochastic diffusion equation
\begin{subequations}
\label{eq:main}
\begin{align}
v_t(\mathbf{x} ,t)&=  \rho \nabla^2 v(\mathbf{x} ,t)+\mu v(\mathbf{x},t)dW(t),~ \mathbf{x}\in \Omega \subset \mathbb{R}^D,~ t \in \mathbb{R}^+,\label{eq:main1}\\
v(\mathbf{x},0)&=h(\mathbf{x}), ~\mathbf{x}\in \Omega,\\
v(\mathbf{x},t)&=F(\mathbf{x},t),~ \mathbf{x}\in \partial \Omega,~ t \in \mathbb{R}^+.
\end{align}
\end{subequations}
where $\Omega \subset \mathbb{R}^D$ is the spatial domain with the boundary $\partial \Omega$,
$v:\Omega \times \mathbb{R}^+ \to \mathbb{R}$ are the unknowns of the system, $\nabla$ is the gradient operator, $\rho$ represents the viscosity coefficient, $\mu$ is the additive
noise and $W$ denotes the white noise
process. 
Furthermore, we prescribe an initial condition 
$h:\Omega  \to \mathbb{R}$
and boundary conditions through a function
$F:\Omega \times \mathbb{R}^+ \to \mathbb{R}^b$.

Any numerical method used to approximate solutions for \eqref{eq:main}
involves discretizing both the spatial and time domains.
First, we describe the approximation of the space derivatives, for simplicity, we ignore the time contribution, and we will study only the following equation
\begin{equation}
\label{eq:eq9}
\nabla^2 v(\mathbf{x})=0.    
\end{equation}
Suppose that 
$\{\mathbf{x}_1,\cdots,\mathbf{x}_N\}$ be the set of all nodes in $\Omega$.
Let us define the star $\{\mathbf{x}_c,\mathbf{x}_1,\cdots,\mathbf{x}_M\}$ for every node, where $\mathbf{x}_c$
is the central node and the rest of nodes are the neighboring nodes of 
$\mathbf{x}_c$.
There are several ways to select the nodes of the star. 
In this work, we take into account the distance of the nodes from the central node and selected the ones closest to it \cite{Jensen1972}, see Figure \ref{fig:distance} for two-dimensional case.
\begin{figure}[H]
\begin{center}
{\includegraphics[width=0.45\textwidth]{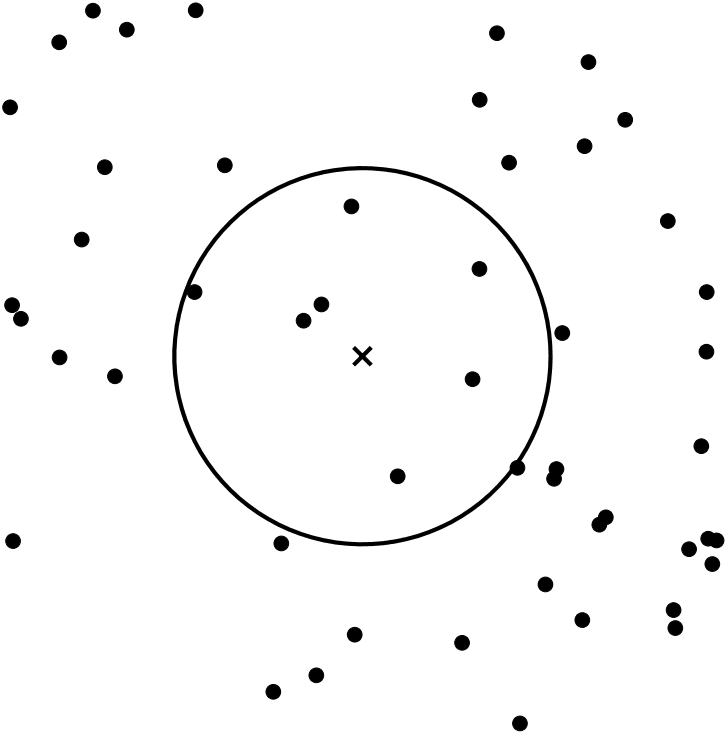}}
\end{center}
\caption{Central node with its star.}
\label{fig:distance}
\end{figure}

We can write the second-order Taylor series expansion around $\mathbf{x}_c$ 
for sufficiently differentiable function $v(\mathbf{x})$  in $\Omega$ in 
\begin{itemize}
    \item One dimension:
    \begin{subequations}
    \label{eq:eq6}
\begin{equation}
u_i=u_c
+p_i \frac{\partial u_c}{\partial x}+\frac{p_i ^2}{2}\frac{\partial^2 u_c}{\partial x^2}.
\end{equation}
 \item Two dimensions:
\begin{equation}
u_i=u_c
+p_i \frac{\partial u_c}{\partial x}
+q_i \frac{\partial u_c}{\partial y}
+\frac{p_i ^2}{2}\frac{\partial^2 u_c}{\partial x^2}
+\frac{q_i ^2}{2}\frac{\partial^2 u_c}{\partial y^2}
+p_i q_i\frac{\partial^2 u_c}{\partial x\partial y}.
\end{equation}
 \item Three dimensions:
\begin{equation}
\begin{split}
u_i&=u_c
+p_i \frac{\partial u_c}{\partial x}
+q_i \frac{\partial u_c}{\partial y}
+r_i \frac{\partial u_c}{\partial z}\\
&+\frac{p_i ^2}{2}\frac{\partial^2 u_c}{\partial x^2}
+\frac{q_i ^2}{2}\frac{\partial^2 u_c}{\partial y^2}
+\frac{r_i ^2}{2}\frac{\partial^2 u_c}{\partial z^2}
+p_i q_i\frac{\partial^2 u_c}{\partial x\partial y}
+p_i r_i\frac{\partial^2 u_c}{\partial x\partial z}
+q_i r_i\frac{\partial^2 u_c}{\partial y\partial z}.
\end{split}
\end{equation}
\end{subequations}
\end{itemize}
where $x_c$, $(x_c,y_c)$ and $(x_c,y_c,z_c)$ denote the coordinates of the central node in 1D, 2D and 3D, respectively, while $x_i$, $(x_i,y_i)$ and $(x_i,y_i,z_i)$  denote the coordinates of the $i$-th node in the star in 1D, 2D and 3D, respectively. $u_c$ and $u_i$ represent the approximation of $v_c$ and $v_i$, respectively, also
$p_i=x_i-x_c$, $q_i=y_i-y_c$ and $r_i=z_i-z_c$.

Multiplying   \eqref{eq:eq6} by 
the weight function $w_i$, which is a function of spatial increments, squaring it and then Summing them 
over all nodes in the star leads to the following formulation \cite{Benito2001} in 
\begin{itemize}
\item One dimension:
\begin{subequations}
\label{eq:eq7}
\begin{equation}
 A^{(2)}=\sum_{i=1}^M ((-u_i+u_c
+p_i \frac{\partial u_c}{\partial x}+\frac{p_i ^2}{2}\frac{\partial^2 u_c}{\partial x^2} )w_i)^2.
\end{equation}
\item Two dimensions:
\begin{equation}
 A^{(5)}=\sum_{i=1}^M ((-u_i+u_c
+p_i \frac{\partial u_c}{\partial x}
+q_i \frac{\partial u_c}{\partial y}
+\frac{p_i ^2}{2}\frac{\partial^2 u_c}{\partial x^2}
+\frac{q_i ^2}{2}\frac{\partial^2 u_c}{\partial y^2}
+p_i q_i\frac{\partial^2 u_c}{\partial x\partial y} )w_i)^2.
\end{equation}
    \item Three dimensions:
\begin{equation}
\begin{split}
 A^{(9)}&=\sum_{i=1}^M ((-u_i+u_c
+p_i \frac{\partial u_c}{\partial x}
+q_i \frac{\partial u_c}{\partial y}
+r_i \frac{\partial u_c}{\partial z}\\
&+\frac{p_i ^2}{2}\frac{\partial^2 u_c}{\partial x^2}
+\frac{q_i ^2}{2}\frac{\partial^2 u_c}{\partial y^2}
+\frac{r_i ^2}{2}\frac{\partial^2 u_c}{\partial z^2}
+p_i q_i\frac{\partial^2 u_c}{\partial x\partial y}
+p_i r_i\frac{\partial^2 u_c}{\partial x\partial z}
+q_i r_i\frac{\partial^2 u_c}{\partial y\partial z} )w_i)^2.
\end{split}
\end{equation}
\end{subequations}
\end{itemize}

Note that the weight functions are positive, symmetric, and uniformly decreasing. Specifically, they decrease as the distance from the center point to the origin increases \cite{Gavete2017}.
There are various weight functions, for example
\begin{enumerate}[label=(\alph*)]
    \item Potential weight function:
\begin{equation*}
  w_i=\frac{1}{\delta_i^n},  
\end{equation*}
\item Exponential weight function:
\begin{equation*}
  w_i=e^{-n(\delta_i)^2},  
\end{equation*}
\item Cubic spline weight function:
\begin{equation*}
    w_i=\begin{cases}
    \frac{2}{3}-4(\frac{\delta_i}{\delta_{\max}})^2+4(\frac{\delta_i}{\delta_{\max}})^3, &\text{if } \delta_i \leq \frac{1}{2}\delta_{\max},\\
    \frac{4}{3}-4(\frac{\delta_i}{\delta_{\max}})+4(\frac{\delta_i}{\delta_{\max}})^2-\frac{4}{3}(\frac{\delta_i}{\delta_{\max}})^3, &\text{if } \frac{1}{2}\delta_{max} < \delta_i \leq \delta_{\max},\\
    0, & \text{otherwise}.
    \end{cases}
\end{equation*}
\end{enumerate}
where $\delta_i$ represent the distance of the central node from the $i$-th node  in the star and  $\delta_{max}$ denote the maximum distance of the central node from all the nodes in the star.

In order to find the space derivatives, we minimize the functions defined by \eqref{eq:eq7} with respect to them. To find the optimal solution, we calculate the partial derivatives of \eqref{eq:eq7} and set them to zero. After simplification, we get the following linear equation system in 1D, 2D and 3D as follows:
\begin{itemize}
\item One dimension:
\begin{subequations}
\begin{equation}   H^{(2)}\mathbf{d}^{(2)}=\mathbf{f}^{(2)},     
\end{equation}
where $H^{(2)} \in \mathbb{R}^{2 \times 2}$, $\mathbf{d}^{(2)} \in \mathbb{R}^2$ and 
$\mathbf{f}^{(2)} \in \mathbb{R}^2$ 
are given by
\begin{align*}
H^{(2)}=&
\begin{bmatrix}
\sum\limits_{i=1}^M p_i ^2 w_i ^2 &&
\sum\limits_{i=1}^M \frac{p_i^3}{2}  w_i ^2\\ 
\sum\limits_{i=1}^M \frac{p_i^3}{2}  w_i ^2 &&
\sum\limits_{i=1}^M \frac{p_i^4}{4}  w_i ^2
\end{bmatrix},\\ 
\mathbf{d}^{(2)}=&
\begin{bmatrix}
\frac{\partial u_c}{\partial x} \\ \frac{\partial^2 u_c}{\partial x^2}
\end{bmatrix},~\mathbf{f}^{(2)}= 
\begin{bmatrix}
\sum\limits_{i=1}^M (-u_c+u_i)p_i w_i ^2 \\ 
\sum\limits_{i=1}^M (-u_c+u_i)\frac{p_i ^2}{2} w_i ^2
\end{bmatrix}.
\end{align*}
 \item Two dimensions:
    \begin{equation}
   H^{(5)}\mathbf{d}^{(5)}=\mathbf{f}^{(5)},     
    \end{equation}
where $H^{(5)} \in \mathbb{R}^{5 \times 5}$, $\mathbf{d}^{(5)} \in \mathbb{R}^5$ and 
$\mathbf{f}^{(5)} \in \mathbb{R}^5$ 
are given by
\begin{equation*}
    \begin{split}
H^{(5)}&=\begin{bmatrix}
\sum\limits_{i=1}^M p_i ^2 w_i ^2 &
\sum\limits_{i=1}^M p_i q_i w_i ^2 &
\sum\limits_{i=1}^M \frac{p_i^3}{2} w_i ^2&
\sum\limits_{i=1}^M \frac{p_i q_i^2}{2} w_i ^2&
\sum\limits_{i=1}^M p_i^2 q_i w_i ^2\\
\sum\limits_{i=1}^M p_i q_i w_i ^2 &
\sum\limits_{i=1}^M q_i^2 w_i ^2 &
\sum\limits_{i=1}^M \frac{p_i^2 q_i}{2} w_i ^2&
\sum\limits_{i=1}^M \frac{q_i^3}{2} w_i ^2&
\sum\limits_{i=1}^M p_i q_i^2 w_i ^2\\
\sum\limits_{i=1}^M \frac{p_i^3}{2} w_i ^2&
\sum\limits_{i=1}^M \frac{p_i^2 q_i}{2} w_i ^2&
\sum\limits_{i=1}^M \frac{p_i^4}{4} w_i ^2&
\sum\limits_{i=1}^M \frac{p_i^2 q_i^2}{4} w_i ^2&
\sum\limits_{i=1}^M \frac{p_i^3 q_i}{2} w_i ^2\\
\sum\limits_{i=1}^M \frac{p_i q_i^2}{2} w_i ^2&
\sum\limits_{i=1}^M \frac{q_i^3}{2} w_i ^2&
\sum\limits_{i=1}^M \frac{p_i^2 q_i^2}{4} w_i ^2&
\sum\limits_{i=1}^M \frac{q_i^4}{4} w_i ^2&
\sum\limits_{i=1}^M \frac{p_i q_i^3}{2} w_i ^2\\
\sum\limits_{i=1}^M p_i^2 q_i w_i ^2&
\sum\limits_{i=1}^M p_i q_i^2 w_i ^2&
\sum\limits_{i=1}^M \frac{p_i^3 q_i}{2} w_i ^2&
\sum\limits_{i=1}^M \frac{p_i q_i^3}{2} w_i ^2&
\sum\limits_{i=1}^M p_i^2 q_i^2 w_i ^2
\end{bmatrix},\\
\mathbf{d}^{(5)}&=
\begin{bmatrix}
\frac{\partial u_c}{\partial x} \\ 
\frac{\partial u_c}{\partial y} \\
\frac{\partial^2 u_c}{\partial x^2}\\
\frac{\partial^2 u_c}{\partial y^2}\\
\frac{\partial^2 u_c}{\partial x \partial y}
\end{bmatrix}, ~ \mathbf{f}^{(5)}= 
\begin{bmatrix}
\sum\limits_{i=1}^M (-u_c+u_i)p_i w_i ^2 \\ 
\sum\limits_{i=1}^M (-u_c+u_i)q_i w_i ^2 \\ 
\sum\limits_{i=1}^M (-u_c+u_i)\frac{p_i ^2}{2} w_i ^2\\
\sum\limits_{i=1}^M (-u_c+u_i)\frac{q_i ^2}{2} w_i ^2\\
\sum\limits_{i=1}^M (-u_c+u_i)p_i q_i  w_i ^2
\end{bmatrix}.
\end{split}
\end{equation*}
\item Three dimensions:
\begin{equation}   H^{(9)}\mathbf{d}^{(9)}=\mathbf{f}^{(9)}.     
\end{equation}
where $H^{(9)} \in \mathbb{R}^{9 \times 9}$ can be obtained with similar computations,
$\mathbf{d}^{(9)} \in \mathbb{R}^9$ and 
$\mathbf{f}^{(9)} \in \mathbb{R}^9$ 
are given by
\allowdisplaybreaks
\begin{align*}
\mathbf{d}^{(9)}=
\begin{bmatrix}
\frac{\partial u_c}{\partial x} \\ 
\frac{\partial u_c}{\partial y} \\
\frac{\partial u_c}{\partial z} \\
\frac{\partial^2 u_c}{\partial x^2}\\
\frac{\partial^2 u_c}{\partial y^2}\\
\frac{\partial^2 u_c}{\partial z^2}\\
\frac{\partial^2 u_c}{\partial x \partial y}\\
\frac{\partial^2 u_c}{\partial x \partial z}\\
\frac{\partial^2 u_c}{\partial y \partial z}
\end{bmatrix}, ~ \mathbf{f}^{(9)}= 
\begin{bmatrix}
\sum\limits_{i=1}^M (-u_c+u_i)p_i w_i ^2 \\ 
\sum\limits_{i=1}^M (-u_c+u_i)q_i w_i ^2 \\ 
\sum\limits_{i=1}^M (-u_c+u_i)r_i w_i ^2 \\ 
\sum\limits_{i=1}^M (-u_c+u_i)\frac{p_i ^2}{2} w_i ^2\\
\sum\limits_{i=1}^M (-u_c+u_i)\frac{q_i ^2}{2} w_i ^2\\
\sum\limits_{i=1}^M (-u_c+u_i)\frac{r_i ^2}{2} w_i ^2\\
\sum\limits_{i=1}^M (-u_c+u_i)p_i q_i  w_i ^2\\
\sum\limits_{i=1}^M (-u_c+u_i)p_i r_i  w_i ^2\\
\sum\limits_{i=1}^M (-u_c+u_i)q_i r_i  w_i ^2
\end{bmatrix}.
\end{align*}
\end{subequations}
\end{itemize}
Since $H^{(2)}$, $H^{(5)}$ and $H^{(9)}$ are symmetric matrices, we can write the Cholesky decomposition of it, namely $SS^T$, where $S$ is a  lower triangular matrix with positive diagonal entries. We denote the entries of the matrix $S$ by $S_{i,j}$, for $i,j=1,\cdots,C$ where $C$ is equal to 2, 5 and 9 for $H^{(2)}$, $H^{(5)}$ and $H^{(9)}$, respectively.

We can express the space derivatives through the following formulation for $l=1,\cdots,C$ \cite{Benito2001}
\begin{equation}
\label{eq:eq8}
 \mathbf{d}_l=  \frac{1}{S_{l,l}}
 (-u_c \sum_{j=1} ^{C} \beta_j R_{l,j}+
 \sum_{i=1} ^M u_i 
 (\sum_{j=1} ^{C} \alpha_{i,j} R_{l,j})
 -\sum_{n=1} ^ {C-l} S_{l+n,l}\mathbf{d}_{l+n}
 ).
\end{equation}
where the coefficients $\alpha_{i,j}$ and $\beta_j$ are given by
\begin{equation*}
\begin{split}
 \alpha_{i,1}&= p_i w_i^2,~
 \alpha_{i,2}= q_i w_i^2,~
 \alpha_{i,3}= r_i w_i^2,~
 \alpha_{i,4}= \frac{p_i^2}{2} w_i^2,~
  \alpha_{i,5}= \frac{q_i^2}{2} w_i^2,~
   \alpha_{i,6}= \frac{r_i^2}{2} w_i^2,\\
    \alpha_{i,7}&= p_i q_i w_i^2,~
     \alpha_{i,8}= p_i r_i w_i^2,~
      \alpha_{i,9}= q_i r_i w_i^2.\\
      \beta_j&=\sum_{i=1}^M \alpha_{i,j}.
   \end{split}
\end{equation*}
and the matrix $R$ belongs to $\mathbb{R}^{C \times C}$  whose $l,j$-th entry for
$l,j=1,\cdots C$
is  defined as
\allowdisplaybreaks
\begin{align*}
R_{l,j}=\begin{cases}
\frac{(-1)^{1-\delta_{l,j}}}{S_{l,l}} \sum\limits_{k=j}^{l-1}  S_{l,k}R_{k,j}, &\text{if } l>j ,\\
\frac{1}{S_{l,l}},&\text{if } l=j,\\
0, & \text{otherwise}.
\end{cases}    
\end{align*}
Here, $\delta_{l,j}$ indicates the Kronecker delta function.

Substituting the space derivatives obtained from \eqref{eq:eq8} into the \eqref{eq:eq9}
leads finally to
\begin{equation}
\label{eq:eq10}
  -\theta_c u_c+ \sum_{i=1} ^M \theta_i u_i=0.
\end{equation}
where $\theta_c=\sum\limits_{i=1} ^M \theta_i$.

In order to solve \eqref{eq:main},
the time domain $[0,T]$ is divided into time intervals $[t_k,t_{k+1}]$. It will be assumed that this partition will be regular, i.e. the  time step is $\Delta t=t_{k+1}-t_k$ for all $k$.

We then integrate \eqref{eq:main1} over time, allowing to rewrite \eqref{eq:main1} as
\begin{equation}
\label{eq:exact}
v(\mathbf{x} ,t_{k+1})-v(\mathbf{x} ,t_{k})=  \rho \int_{t_{k}} ^{t_{k+1}} \nabla^2 v(\mathbf{x} ,z)\, dz+\mu  \int_{t_{k}} ^{t_{k+1}} v(\mathbf{x},z)dW(z).
\end{equation}
Using \eqref{eq:eq10}, $u_c ^{k+1}$ is written as
\begin{equation}
\label{eq:scheme}
 u_c ^{k+1}=u_c ^k+ \rho \Delta t(-\theta_c u_c ^k+\sum_{i=1} ^M \theta_i u_i ^k ) + \mu u_c ^k(W((k+1)\Delta t)-W(k\Delta t)). 
\end{equation}

\section{Theoretical Analysis  of Stochastic Generalized Finite Difference scheme}
\label{sec:stability}

Let us start by considering the SPDE 
\begin{equation}
\label{eq:spde}
\mathcal{L}v(\mathbf{x},t)=g.
\end{equation}
where $\mathcal{L}$ denotes a  differential operator.
Equation \eqref{eq:spde} is complemented by initial
condition.
In order to approximate the solution for \eqref{eq:spde}, any numerical method involves discretizing the spatial and time domains. 
Let $\delta$  denote the maximum distance of the central node from  the nodes in the star for all the stars.
Suppose the finite difference scheme $\mathcal{L}_c ^k$ produces an approximate solution $u_c ^k$, i.e. $\mathcal{L}_c ^k u_c ^k=g_c ^k$.

We recall the important definitions, which are the key concepts of stochastic difference methods, namely consistency, stability and convergence.

\begin{definition}
A stochastic difference scheme $\mathcal{L}_c ^k u_c ^k=g_c ^k$
is called  pointwise consistent with $\mathcal{L}v=g$ at point $(\mathbf{x},t)$, in case 
\begin{equation}
\begin{split}
 \mathbb{E} \Vert (\mathcal{L}\psi-g)|_c ^k &-(\mathcal{L}_c ^k \psi(\mathbf{x}_c,t_k)-g_c ^k)\Vert^2 \to 0,\\ &\text{ for } ~ \delta \to 0,~ \Delta t \to 0,~ (\mathbf{x}_c,(k+1)\Delta t)\to (\mathbf{x},t).
 \end{split}
\end{equation}
in mean square for any continuously differentiable function $\psi=\psi(\mathbf{x},t)$.
\end{definition}

\begin{theorem}\label{theorem:consistency}
 The scheme defined in \eqref{eq:scheme} is consistent
in mean square for \eqref{eq:main}.   
\end{theorem}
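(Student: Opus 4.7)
The plan is to pick a sufficiently smooth test function $\psi(\mathbf{x},t)$ and to compare, at a node $\mathbf{x}_c$ and time index $k$, the continuous residual associated with \eqref{eq:main} written in integrated form,
\[
(\mathcal{L}\psi-g)|_c^k = \psi(\mathbf{x}_c,t_{k+1})-\psi(\mathbf{x}_c,t_k)-\rho\!\int_{t_k}^{t_{k+1}}\!\nabla^2\psi(\mathbf{x}_c,z)\,dz-\mu\!\int_{t_k}^{t_{k+1}}\!\psi(\mathbf{x}_c,z)\,dW(z),
\]
with the discrete residual obtained by substituting $\psi$ into \eqref{eq:scheme},
\[
\mathcal{L}_c^k\psi-g_c^k = \psi(\mathbf{x}_c,t_{k+1})-\psi(\mathbf{x}_c,t_k)-\rho\Delta t\Bigl(-\theta_c\psi(\mathbf{x}_c,t_k)+\sum_{i=1}^{M}\theta_i\psi(\mathbf{x}_i,t_k)\Bigr)-\mu\psi(\mathbf{x}_c,t_k)\,\Delta W_k,
\]
with $\Delta W_k:=W((k+1)\Delta t)-W(k\Delta t)$. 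The point evaluations of $\psi$ cancel, so the difference splits naturally into a deterministic part $E_1$ (coming from the Laplacian stencil and the drift integral) and a stochastic part $E_2$ (coming from the Itô integral and the forward-Euler noise term). The elementary bound $(E_1+E_2)^2\le 2E_1^2+2E_2^2$ then reduces the problem to showing $\mathbb{E}[E_1^2]\to 0$ and $\mathbb{E}[E_2^2]\to 0$ in the joint limit $\delta,\Delta t\to 0$.

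For the deterministic piece $E_1$, I will exploit the very construction that leads to \eqref{eq:eq10}. Because the weights $\theta_i$ are obtained from the quadratic Taylor expansion \eqref{eq:eq6} through the least-squares system \eqref{eq:eq7}, they reproduce polynomials up to degree two exactly, so that
\[
-\theta_c\psi(\mathbf{x}_c,t_k)+\sum_{i=1}^{M}\theta_i\psi(\mathbf{x}_i,t_k)=\nabla^2\psi(\mathbf{x}_c,t_k)+R_s(\delta),
\]
with a remainder $R_s(\delta)$ that collects the cubic-and-higher Taylor terms in the spatial increments $p_i,q_i,r_i$ and satisfies $|R_s(\delta)|\le C_1\delta$ for $\psi\in C^3$. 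A first-order time Taylor expansion gives
\[
\int_{t_k}^{t_{k+1}}\nabla^2\psi(\mathbf{x}_c,z)\,dz=\Delta t\,\nabla^2\psi(\mathbf{x}_c,t_k)+R_t(\Delta t),\qquad |R_t(\Delta t)|\le C_2\Delta t^2.
\]
Combining, $E_1=\rho\,\Delta t\,R_s(\delta)-\rho\,R_t(\Delta t)$, which is deterministic, so $\mathbb{E}[E_1^2]\le 2\rho^2\bigl(C_1^2\Delta t^2\delta^2+C_2^2\Delta t^4\bigr)\to 0$.

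For the stochastic piece $E_2$, I will rewrite $\mu\psi(\mathbf{x}_c,t_k)\Delta W_k=\mu\int_{t_k}^{t_{k+1}}\psi(\mathbf{x}_c,t_k)\,dW(z)$ so that
\[
E_2=\mu\!\int_{t_k}^{t_{k+1}}\!\bigl(\psi(\mathbf{x}_c,t_k)-\psi(\mathbf{x}_c,z)\bigr)\,dW(z).
\]
The integrand is adapted and deterministic, so the Itô isometry together with continuous differentiability of $\psi$ in time yields
\[
\mathbb{E}[E_2^2]=\mu^2\!\int_{t_k}^{t_{k+1}}\!\bigl(\psi(\mathbf{x}_c,t_k)-\psi(\mathbf{x}_c,z)\bigr)^2\,dz\le \mu^2\,\|\psi_t\|_\infty^2\,\tfrac{\Delta t^{3}}{3}\to 0.
\]

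The main obstacle, and the step I would write out most carefully, is the spatial estimate $|R_s(\delta)|\le C_1\delta$. To justify it rigorously one has to trace through the normal equations \eqref{eq:eq8} to verify that the weights $\theta_i$ reproduce the Laplacian exactly on quadratic polynomials (together with the identity $\theta_c=\sum_i\theta_i$ already recorded after \eqref{eq:eq10}) and that the Gram matrix $H$ remains uniformly well conditioned as the star is refined, so that the coefficients $\theta_i$ scale like $\delta^{-2}$ in a controlled way and only the cubic Taylor remainder survives. Once that spatial truncation bound is in hand, the two error estimates combine by the triangle inequality to give $\mathbb{E}\bigl\|(\mathcal{L}\psi-g)|_c^k-(\mathcal{L}_c^k\psi-g_c^k)\bigr\|^2\to 0$ in the required limit, which is exactly the definition of mean-square pointwise consistency.
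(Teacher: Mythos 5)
Your proposal is correct and follows essentially the same route as the paper's proof: the same split of the residual into a deterministic Laplacian-stencil error and a stochastic It\^o-integral error, the same elementary bound $\mathbb{E}|E_1+E_2|^2\le 2\mathbb{E}|E_1|^2+2\mathbb{E}|E_2|^2$, the It\^o isometry for the noise term, and the identification of the spatial error with the cubic-and-higher Taylor remainders filtered through the least-squares system $H\mathbf{d}=\mathbf{f}$. The one step you single out as the main obstacle --- showing that the weights $\theta_i$ remain controlled under refinement of the star so that the remainder $\sum_i \theta_i\,O(\delta^3)$ actually vanishes --- is exactly the step the paper also leaves implicit (its final line tends to zero only under such a conditioning assumption on $H$), so your write-up is, if anything, more candid about the hypothesis needed to close the argument.
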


\begin{proof}
\label{proof1}
Here, we consider the one-dimensional case of \eqref{eq:main} (see Appendix \ref{sec:appendix1} for proofs for the two- and three-dimensional cases). For continuously differentiable function $\psi$, we can write
\begin{equation*}
\begin{split}
    \mathcal{L}\psi|_c ^k &= \psi (x_c,(k+1)\Delta t)-\psi (x_c,k\Delta t)-
    \rho \int_{t_{k}} ^{t_{k+1}} \psi_{xx} (x_c ,l)\, dl\\
    &-\mu  \int_{t_{k}} ^{t_{k+1}} \psi(x_c,l)dW(l).
    \end{split}
\end{equation*} 
Similarly, we have
\begin{equation*}
\begin{split}
 \mathcal{L}_c ^k \psi&= \psi (x_c,(k+1)\Delta t)-\psi (x_c,k\Delta t)- \rho \Delta t(-\theta_c \psi (x_c,k\Delta t)+\sum_{i=1} ^M \theta_i \psi (x_i,k\Delta t) )\\
 &- \mu \psi (x_c,k\Delta t)(W((k+1)\Delta t)-W(k\Delta t)).  
 \end{split}
\end{equation*}
By taking the expected value of the square of  $\mathcal{L}\psi|_c ^k-   \mathcal{L}_c ^k \psi$, we get
\begin{equation*}
\begin{split}
\mathbb{E}&|\mathcal{L}\psi|_c ^k-   \mathcal{L}_c ^k \psi|^2 \\&\leq 2 \rho ^2 \mathbb{E}|
\int_{t_{k}} ^{t_{k+1}} (\psi_{xx} (x_c ,l)-(-\theta_c \psi (x_c,k\Delta t)+\sum_{i=1} ^M \theta_i \psi (x_i,k\Delta t)))\, dl|^2\\&
+2 \mu ^2 \mathbb{E}|
\int_{t_{k}} ^{t_{k+1}} (\psi(x_c,l)
-\psi (x_c,k\Delta t))
dW(l)
|^2,
\end{split}
\end{equation*}
Note that $\psi$ is deterministic. We get the following relation
\begin{equation}
\label{eq:eq2}
\begin{split}
\mathbb{E}&|\mathcal{L}\psi|_c ^k-   \mathcal{L}_c ^k \psi|^2 \\&\leq 2 \rho ^2 \mathbb{E}|
\int_{t_{k}} ^{t_{k+1}} (\psi_{xx} (x_c ,l)-(-\theta_c \psi (x_c,k\Delta t)+\sum_{i=1} ^M \theta_i \psi (x_i,k\Delta t)))\, dl|^2\\&
+2 \mu ^2 
\int_{t_{k}} ^{t_{k+1}} |\psi(x_c,l)
-\psi (x_c,k\Delta t)|^2
dl.
\end{split}
\end{equation}
when $\Delta t \to 0$, the last term tends to zero.
Next,  we prove that the first term tends to zero as $\delta \to 0$ and $\Delta t\to 0$.  The main idea is  to use the Taylor series expansion. The approximation for $\psi_{xx} (x_c ,k\Delta t)$ is obtained in a similar
manner that is introduced in Section \ref{sec:methodology}, see \cite{Benito2007} for details. 
\begin{equation}
\label{eq:eq1}
 \psi_{xx} (x_c ,k\Delta t)-(-\theta_c \psi (x_c,k\Delta t)+\sum_{i=1} ^M \theta_i \psi (x_i,k\Delta t)) = 
 \begin{bmatrix}
0 && 1
\end{bmatrix}
(\mathbf{d}^{(2)}-(H^{(2)})^{-1}\mathbf{f}^{(2)}).
\end{equation}
Suppose  that we use higher order derivatives in the Taylor series expansion. By  minimizing the norm $A^{(2)}(\psi)$ with respect to  $\frac{\partial \psi_c}{\partial x}$ and $\frac{\partial^2 \psi_c}{\partial x^2}$, we  get  
\begin{equation*}
 H^{(2)}  \mathbf{d}^{(2)}=  \tilde{\mathbf{f}}^{(2)}.
\end{equation*}
We substitute $\mathbf{d}^{(2)}$ in
\eqref{eq:eq1}, we have
\begin{equation*}
\begin{split}
 \psi_{xx} (x_c ,k\Delta t)&-(-\theta_c \psi (x_c,k\Delta t)+\sum_{i=1} ^M \theta_i \psi (x_i,k\Delta t)) \\
 &= 
 \begin{bmatrix}
0 && 1
\end{bmatrix}
(H^{(2)})^{-1}(\tilde{\mathbf{f}}^{(2)}-\mathbf{f}^{(2)})\\
&= 
 \begin{bmatrix}
0 && 1
\end{bmatrix}
(H^{(2)})^{-1}
\begin{bmatrix}
- \sum\limits_{i=1}^M (\frac{1}{3!}(p_i \frac{\partial \psi_c}{\partial x})^3
+\frac{1}{4!}(p_i \frac{\partial \psi_c}{\partial x})^4+\cdots)p_i w_i^2\\
- \sum\limits_{i=1}^M (\frac{1}{3!}(p_i \frac{\partial \psi_c}{\partial x})^3
+\frac{1}{4!}(p_i \frac{\partial \psi_c}{\partial x})^4+\cdots)
\frac{p_i^2}{2} 
w_i^2
\end{bmatrix}\\
&= -
\sum\limits\limits_{i=1}^M  \frac{1}{3!} (p_i\frac{\partial \psi_c}{\partial x})^3 \theta_i
-
\sum\limits_{i=1}^M  \frac{1}{4!} (p_i \frac{\partial \psi_c}{\partial x})^4 \theta_i -\cdots\\
&= -
\sum\limits_{i=1}^M  \frac{p_i^3}{3!} \frac{\partial^3 \psi_c}{\partial x^3} \theta_i
-
\sum\limits_{i=1}^M  \frac{p_i^4}{4!} \frac{\partial^4 \psi_c}{\partial x^4} \theta_i -\cdots.
\end{split}
\end{equation*}
Therefore, as $\delta\to 0$ and $\Delta t\to 0$, the
first term in \eqref{eq:eq2} tends to zero. Finally, this proves that \eqref{eq:scheme} is consistent
in mean square with \eqref{eq:main}.
\end{proof}

\begin{definition}
Given a stochastic difference scheme, it is stable with respect to a norm in mean square supposing that there exist non-negative constants $C$ and $\gamma$ and positive constants $\delta'$ and $\Delta t'$ verifying the relation
\begin{equation}
\begin{split}
\mathbb{E} \Vert u^{k+1}\Vert ^2&\leq 
Ce^{\gamma t}\mathbb{E} \Vert u^{0}\Vert ^2,\\ &\forall t=(k+1)\Delta t,~0\leq \delta\leq \delta',~ 0\leq \Delta t \leq \Delta t'.
\end{split}
\end{equation}
\end{definition}

\begin{theorem}\label{theorem:stable}
 The scheme defined in \eqref{eq:scheme} is stable with respect to sup-norm in mean square
 for $0\leq \rho \Delta t \max\limits_{c} \theta_c \leq 1$
 for \eqref{eq:main}.
\end{theorem}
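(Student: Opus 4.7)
The plan is to exploit the conditional independence of the Wiener increment from the past. Set $\Delta W_k := W((k+1)\Delta t) - W(k\Delta t)$, which is independent of $\mathcal{F}_{k\Delta t}$ with $\mathbb{E}\Delta W_k = 0$ and $\mathbb{E}(\Delta W_k)^2 = \Delta t$, and rewrite the update \eqref{eq:scheme} as
\begin{equation*}
u_c^{k+1} = a_c^k + \mu\, u_c^k\, \Delta W_k,
\qquad
a_c^k := (1-\rho\Delta t\,\theta_c)\,u_c^k + \rho\Delta t\sum_{i=1}^M \theta_i\,u_i^k,
\end{equation*}
so that the deterministic-in-the-step part $a_c^k$ is $\mathcal{F}_{k\Delta t}$-measurable while all fresh randomness is carried by $\Delta W_k$. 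Since $\theta_c=\sum_i \theta_i$, the coefficients $(1-\rho\Delta t\,\theta_c)$ and $\{\rho\Delta t\,\theta_i\}_{i=1}^M$ sum to one, and under the CFL-type hypothesis $0\le \rho\Delta t\,\theta_c\le 1$ together with the standing nonnegativity $\theta_i\ge 0$ of the stencil weights they form a convex combination of $\{u_c^k,u_1^k,\dots,u_M^k\}$.

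The next step is to combine Jensen's inequality with the It\^o-type cancellation. Jensen yields
\begin{equation*}
(a_c^k)^2 \le (1-\rho\Delta t\,\theta_c)(u_c^k)^2 + \rho\Delta t\sum_{i=1}^M \theta_i\,(u_i^k)^2,
\end{equation*}
while squaring the update and conditioning on $\mathcal{F}_{k\Delta t}$ kills the cross term $2a_c^k\mu u_c^k\mathbb{E}[\Delta W_k]$, leaving
\begin{equation*}
\mathbb{E}\bigl[(u_c^{k+1})^2 \mid \mathcal{F}_{k\Delta t}\bigr] = (a_c^k)^2 + \mu^2\Delta t\,(u_c^k)^2.
\end{equation*}
Taking unconditional expectation, inserting the Jensen bound, and passing to the maximum over $c$ on both sides will then produce the one-step estimate
\begin{equation*}
\max_c \mathbb{E}(u_c^{k+1})^2 \le (1+\mu^2\Delta t)\,\max_c \mathbb{E}(u_c^k)^2.
\end{equation*}
Iterating $k+1$ times and using $1+x\le e^x$ with $t=(k+1)\Delta t$ will give $\mathbb{E}\|u^{k+1}\|^2 \le e^{\mu^2 t}\,\mathbb{E}\|u^0\|^2$, i.e.\ stability with $C=1$ and $\gamma=\mu^2$.

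The main obstacle will be justifying the convex-combination reduction: the Jensen bound on $(a_c^k)^2$ crucially needs both the stated hypothesis $\rho\Delta t\,\theta_c\le 1$ and the nonnegativity of the stencil coefficients $\theta_i$, the latter being a property of the star geometry and weight function rather than an automatic consequence of the normal-equation derivation of \eqref{eq:eq10}. If $\theta_i$ is allowed to change sign, one has to fall back on a triangle-inequality estimate with $|\theta_i|$, producing a larger effective $\gamma$ but preserving the exponential form. A second delicate point is that $\mathbb{E}\|\cdot\|^2$ must be read here in the $\max_c \mathbb{E}|u_c|^2$ sense; the alternative $\mathbb{E}\max_c|u_c|^2$ would generate an $\mathcal{O}(\sqrt{\Delta t})$ contribution from $\mathbb{E}|\Delta W_k|$ which, when propagated through $N=t/\Delta t$ steps, cannot be closed into an exponential bound, so the order of $\max$ and $\mathbb{E}$ is essential for the argument to work.
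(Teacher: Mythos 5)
Your proposal is correct and follows essentially the same route as the paper: isolate the noise contribution via the independence (zero mean, variance $\Delta t$) of the Wiener increment, show that under $0\le\rho\Delta t\,\theta_c\le 1$ the drift part is non-expansive in the sup-of-second-moments norm, and iterate the resulting one-step bound $(1+\mu^2\Delta t)$ with $1+x\le e^x$ to reach $e^{\mu^2 t}$. The only differences are cosmetic-to-clarifying: you bound $(a_c^k)^2$ by Jensen on the convex combination where the paper expands the square and estimates cross terms by Cauchy--Schwarz, and you make explicit the nonnegativity $\theta_i\ge 0$ of the stencil weights, which the paper uses silently in the step $\sum_i|\theta_i|=\theta_c$.
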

\begin{proof}
We take the expected value of the square of  \eqref{eq:scheme}.  Knowing that the Wiener increments are independent, we obtain
\allowdisplaybreaks
\begin{align*}
\mathbb{E} |u_c ^{k+1}|^2&= \mathbb{E} |u_c ^k+ \rho \Delta t(-\theta_c u_c ^k+\sum_{i=1} ^M \theta_i u_i ^k )|^2 + \mu^2 \Delta t \mathbb{E} |u_c ^k|^2 \\
&= \mathbb{E} |(1-\rho \Delta t\theta_c)u_c ^k+\rho \Delta t\sum_{i=1} ^M \theta_i u_i ^k )|^2 + \mu^2 \Delta t \mathbb{E} |u_c ^k|^2 \\
&= (1-\rho \Delta t\theta_c)^2 \mathbb{E}|u_c ^k|^2+(\rho \Delta t)^2\sum_{i=1} ^M \theta_i^2  \mathbb{E}|u_i ^k|^2\\
&+2|1-\rho \Delta t\theta_c||\rho \Delta t|
\sum_{i=1} ^M |\theta_i| \mathbb{E} |u_i ^ku_c ^k|
+2 (\rho \Delta t)^2\sum_{i=1} ^M \sum_{i'=i+1} ^M |\theta_i \theta_{i'}| \mathbb{E} |u_i ^k u_{i'} ^k|
\\
&+ \mu^2 \Delta t \mathbb{E} |u_c ^k|^2\\
&\leq ((|1-\rho \Delta t\theta_c|+|\rho \Delta t\theta_c|)^2+ \mu^2 \Delta t)\sup\limits_{c} \mathbb{E} |u_c ^k|^2.
\end{align*}
under the condition 
$0\leq \rho \Delta t \max\limits_{c} \theta_c \leq 1$
, there holds the following relation
\begin{equation*}
\mathbb{E} |u_c ^{k+1}|^2\leq 
(1+ \mu^2 \Delta t)\sup\limits_{c} \mathbb{E} |u_c ^k|^2,
\end{equation*}
Hence, we have
\begin{equation}
\label{eq:eq3}
\sup\limits_{c} \mathbb{E} |u_c ^{k+1}|^2\leq 
(1+ \mu^2 \Delta t)\sup\limits_{c} \mathbb{E} |u_c ^k|^2,
\end{equation}
Substituting $(k+1)\Delta t=t$ into \eqref{eq:eq3}, we obtain
\begin{equation*}
\begin{split}
 \mathbb{E} \Vert u^{k+1}\Vert _{\infty}^2&\leq 
(1+ \frac{\mu^2 t}{k+1})\mathbb{E}\Vert u^{k}\Vert _{\infty}^2\\
&\leq 
(1+ \frac{\mu^2 t}{k+1})^{k+1}\mathbb{E}\Vert u^{0}\Vert _{\infty}^2,
\end{split}
\end{equation*}
One can write:
\begin{equation*}
 \mathbb{E} \Vert u^{k+1}\Vert _{\infty}^2\leq 
e^{\mu^2 t}\mathbb{E} \Vert u^{0}\Vert _{\infty}^2.
 \end{equation*}
Therefore,  the proposed scheme is conditionally stable.
\end{proof}

\begin{definition}
A stochastic difference scheme $\mathcal{L}_c ^k u_c ^k=g_c ^k$ is called convergent in mean square at time
$t$ for solving  $\mathcal{L}v=g$, in case
\begin{equation}
\mathbb{E} \Vert u^{k+1}- v^{k+1} \Vert^2 \to 0, ~\text{as } (k+1)\Delta t \to t.
\end{equation}
and $(k+1)\Delta t= t$, $\delta \to 0$ and $\Delta t\to 0$.
\end{definition}

\begin{theorem}
 The scheme defined in \eqref{eq:scheme} is  convergent for the   sup-norm for 
 $0\leq \rho \Delta t \max\limits_{c} \theta_c \leq 1$
 for solving \eqref{eq:main}.
\end{theorem}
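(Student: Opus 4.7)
The plan is to invoke the stochastic Lax--Richtmyer equivalence principle: having established consistency (Theorem~\ref{theorem:consistency}) and stability (Theorem~\ref{theorem:stable}) in mean square, convergence should follow from a standard error-recursion argument. I would first introduce the pointwise error $e_c^k := v(x_c, k\Delta t) - u_c^k$ and subtract \eqref{eq:scheme} from the analogous identity obeyed by the exact solution to obtain
\begin{equation*}
e_c^{k+1} = (1-\rho\Delta t\,\theta_c) e_c^k + \rho \Delta t \sum_{i=1}^M \theta_i e_i^k + \mu e_c^k \bigl(W((k+1)\Delta t)-W(k\Delta t)\bigr) + T_c^k,
\end{equation*}
where $T_c^k := \mathcal{L}v|_c^k - \mathcal{L}_c^k v$ is the local truncation error already bounded by Theorem~\ref{theorem:consistency}.

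I would then square, take expectations, and exploit the independence of the Brownian increment from the error at step $k$, so that the mixed stochastic terms drop out and the noise variance contributes exactly $\mu^2 \Delta t\,\mathbb{E}|e_c^k|^2$. The deterministic cross terms between the propagated error and $T_c^k$ are controlled by Young's inequality. Under the CFL-type hypothesis $0 \le \rho\Delta t \max_c \theta_c \le 1$, the estimate used in the proof of Theorem~\ref{theorem:stable} carries through and yields a recursion of the form
\begin{equation*}
\sup_c \mathbb{E}|e_c^{k+1}|^2 \le (1+C_1\Delta t)\sup_c \mathbb{E}|e_c^k|^2 + C_2 \sup_c \mathbb{E}|T_c^k|^2,
\end{equation*}
with constants $C_1, C_2$ independent of $\delta$ and $\Delta t$.

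Iterating this recursion and applying the discrete Gr\"onwall lemma, exactly as in the final step of Theorem~\ref{theorem:stable}, produces
\begin{equation*}
\sup_c \mathbb{E}|e_c^{k+1}|^2 \le e^{C_1 t}\sup_c \mathbb{E}|e_c^0|^2 + C_2\, e^{C_1 t}\sum_{j=0}^{k} \sup_c \mathbb{E}|T_c^j|^2.
\end{equation*}
Since $e_c^0 = 0$ by matched initial data and Theorem~\ref{theorem:consistency} controls each summand uniformly, the right-hand side vanishes in the joint limit $\delta,\Delta t\to 0$ with $(k+1)\Delta t = t$ fixed, which gives the claimed mean-square convergence in the sup-norm. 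The main obstacle is quantitative: the consistency statement of Definition~1 is only a pointwise $o(1)$ bound, whereas the telescoped sum contains $O(1/\Delta t)$ terms, so closing the argument genuinely requires $\sup_c \mathbb{E}|T_c^j|^2 = o(\Delta t)$. Re-examining the Taylor-expansion identity in the proof of Theorem~\ref{theorem:consistency}, the deterministic contribution from the meshless Laplacian quadrature is of order $\delta^2 \Delta t$ and the stochastic contribution is of order $\Delta t^2$ by It\^o isometry, so the required rate is available provided the weights $\theta_c, \theta_i$ and the Cholesky factor $S$ enjoy uniform bounds as $\delta \to 0$---this uniformity over all stars is the delicate inheritance from the meshless geometry.
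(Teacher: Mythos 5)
The paper's entire proof of this theorem is a single sentence: it cites Theorem~\ref{theorem:consistency} (consistency), Theorem~\ref{theorem:stable} (stability), and the stochastic Lax--Richtmyer equivalence theorem of Roth, and concludes convergence under the CFL-type condition. Your proposal adopts the same top-level strategy but then actually executes the equivalence argument rather than invoking it as a black box: you form the error $e_c^k$, derive the one-step recursion, use independence of the Wiener increment to kill the mixed terms, and close with a discrete Gr\"onwall estimate. That extra work buys you something real. In particular, your observation that the telescoped sum contains $O(1/\Delta t)$ truncation terms, so that pointwise consistency in the sense of Definition~1 (a bare $o(1)$ statement) is not literally sufficient and one needs $\sup_c \mathbb{E}|T_c^j|^2 = o(\Delta t)$, is a genuine gap in the paper's reasoning that the one-line citation papers over; fortunately the bound \eqref{eq:eq2} actually delivers a deterministic contribution of order $\delta^2\Delta t^2$ (you wrote $\delta^2\Delta t$, a harmless overestimate) and a stochastic contribution of order $\Delta t^3$ by the It\^o isometry, both of which are $o(\Delta t)$, so the argument closes. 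Your final caveat --- that the constants require uniform bounds on $\theta_i$ and on $(H^{(C)})^{-1}$ over all stars as $\delta \to 0$, i.e.\ a geometric non-degeneracy condition on the point clouds --- is likewise a hypothesis the paper tacitly assumes but never states. In short: same route as the paper, but your version is the proof the paper should have written, and the two obstacles you flag are the two places where the published argument is incomplete.
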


\begin{proof}
Using Theorems \ref{theorem:consistency} and \ref{theorem:stable}, and due to the stochastic Lax-Richtmyer theorem \cite{Roth1989}, the scheme defined in \eqref{eq:scheme} is  convergent  under the assumption that 
 $0\leq \rho \Delta t \max\limits_{c} \theta_c \leq 1$.
\end{proof}

\section{Numerics}
\label{sec:numerics}
In this section, we present numerical results in order to validate the theoretical results.
We perform the convergence analysis for the one- , two- and three-dimensional test cases.

For all test problems presented in this article, we perform 1000 realizations, and we use
the following potential weight function
\begin{equation*}
  w_i=\frac{1}{\delta_i^3}.  
\end{equation*}
The $L ^2$-error and $L ^{\infty}$-error are evaluated as:
\begin{align*}
L ^2\text{-error}&= \sqrt{ \frac{1}{N_t N} \sum\limits_{k=1} ^{N_t}
\sum\limits_{c=1} ^N 
|v(\mathbf{x}_c,t_k)-\mathbb{E}(u_{c} ^k)|^2},\\
L ^{\infty}\text{-error}&=\max\limits_c |v(\mathbf{x}_c,t_k)-\mathbb{E}(u_{c} ^k)|.
\end{align*}
where  $N_t$ represents the total number of time steps. Moreover, $\mathbb{E}(u_{c} ^k)$ and $v(\mathbf{x}_c,t_k)$ denote the expectation of numerical solution and analytical solution at  node $\mathbf{x}_c$ and time $t_k$, respectively.

\subsection{1D Numerical Tests}
Let us consider  the one-dimensional stochastic diffusion equation
\begin{align}
\label{eq:1d}
\begin{cases}
v_t(x ,t)&=  \rho  v_{xx}(x ,t)+\mu v(x,t)dW(t),~ x\in [0,1],~ t \in [0,1],\\
v(x,0)&=\sin(\pi x),~ x\in [0,1],\\
v(0,t)&=v(1,t)=0,~ t \in [0,1].
\end{cases}
\end{align}
It is easy to see that \eqref{eq:1d} has the following exact
expected solution
\begin{equation*}
 v(x,t)=e^{-\rho \pi^2 t}  \sin(\pi x). 
\end{equation*}
The space domain is discretized using random points given in Figure \ref{fig:1d}. The refined mesh is generated by adding midpoints. We set $\rho=0.005$ and $\mu=0.1$. The $L ^2$-error and $L ^{\infty}$-error for \eqref{eq:1d} can be found in Table \ref{tab:error_1d}. As expected, the errors decrease.
Figure \ref{fig:sol_37} shows the mean solution and the analytical solution for $N=37$ at final time $T=1$.
Further, in Figure \ref{fig:unstable_37} we consider the particular case where the stability condition is not satisfied when $N=37$ and confirm the stability conditions give in Theorem \ref{theorem:stable}.

\begin{figure}[H]
\begin{center}
{\includegraphics[width=0.45\textwidth]{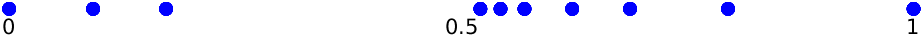}}
\end{center}
\caption{One dimension grid of nodes.}
\label{fig:1d}
\end{figure}

\begin{table}[H]
\centering
  \caption{Convergence study for the stochastic diffusion equation in one dimension.}\label{tab:error_1d}
\begin{tabular}{c   c c} 
\hline
$N$  & $L ^2$-error & $L ^{\infty}$-error\\
\hline
10  &3.4524e-03 &1.4585e-02 \\
19  &4.2493e-04 & 2.2295e-03\\
37  &4.0774e-04 & 2.0976e-03\\
\hline
\end{tabular}
\end{table}

\begin{figure}[H]
\begin{center}
{\includegraphics[width=0.45\textwidth]{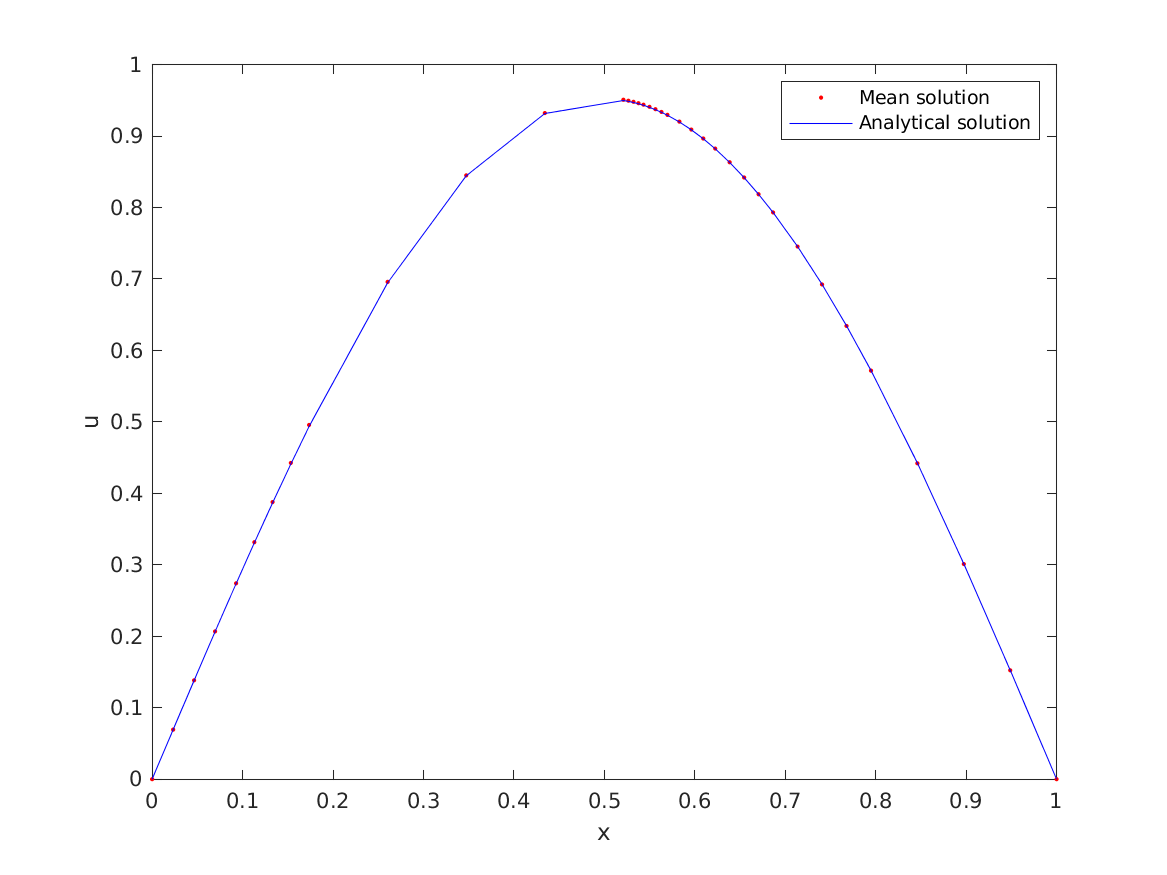}}
\end{center}
\caption{Plot of the mean solution and the analytical solution for the stochastic diffusion equation in one dimension.}
\label{fig:sol_37}
\end{figure}

\begin{figure}[H]
\begin{center}
{\includegraphics[width=0.45\textwidth]{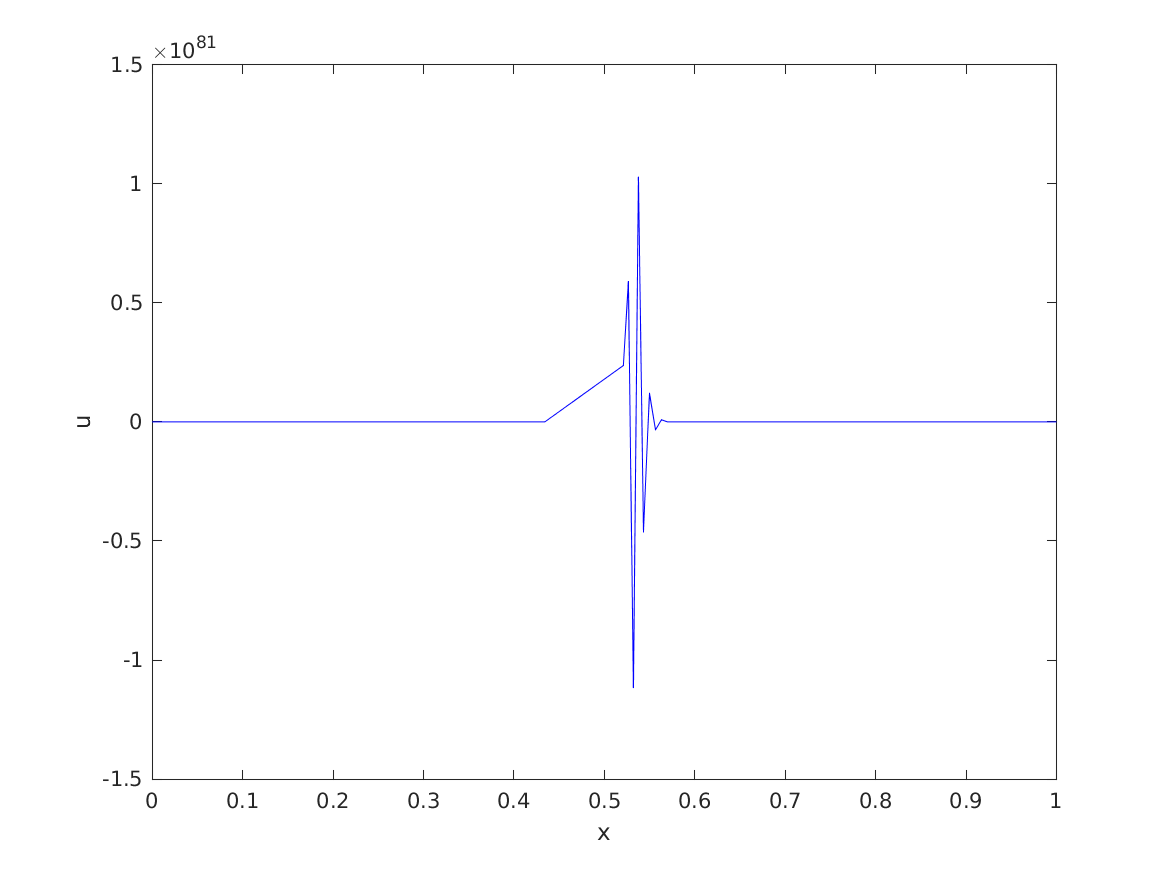}}
\end{center}
\caption{Plot of the mean solution for the stochastic diffusion equation for one dimension with $\rho=0.01$ and $\mu=0.1$.}
\label{fig:unstable_37}
\end{figure}

\subsection{2D Numerical Tests}
We have tested our strategy on \eqref{eq:main1} in two dimensions with two different clouds of points, see Figures \ref{fig:cloud1-2-3} and \ref{fig:cloud4-5-6}. 
The computational domain is a square $\Omega=[0,1] \times [0,1]$.
The initial condition is
\begin{equation*}
v(\mathbf{x},0)=\sin(\frac{x+y}{2} ),~\mathbf{x}\in \Omega, ~t>0,  
\end{equation*}
and the boundary condition is
\begin{align*}
v(\mathbf{x},t)&=e^{-\frac{\rho t}{2}}\sin(\frac{x+y}{2} ),~\mathbf{x}  \in \partial \Omega, ~t>0.    
\end{align*}
We use $\rho=0.01$. We can see in Tables \ref{tab:error_123} and \ref{tab:error_456} the errors obtained for \ref{fig:cloud1-2-3} and \ref{fig:cloud4-5-6}, respectively.
We can see that the error decay fulfils the expected behavior.
We also compare the mean and analytical solutions for cloud 3 in Figure \ref{fig:sol_289}. 
Figure \ref{fig:unstable_289} shows that the scheme is not stable under the assumption that $\rho=0.1$ for cloud 3.

\begin{figure}[H]
\begin{center}
\subfigure[Cloud 1]{\includegraphics[width=0.45\textwidth]{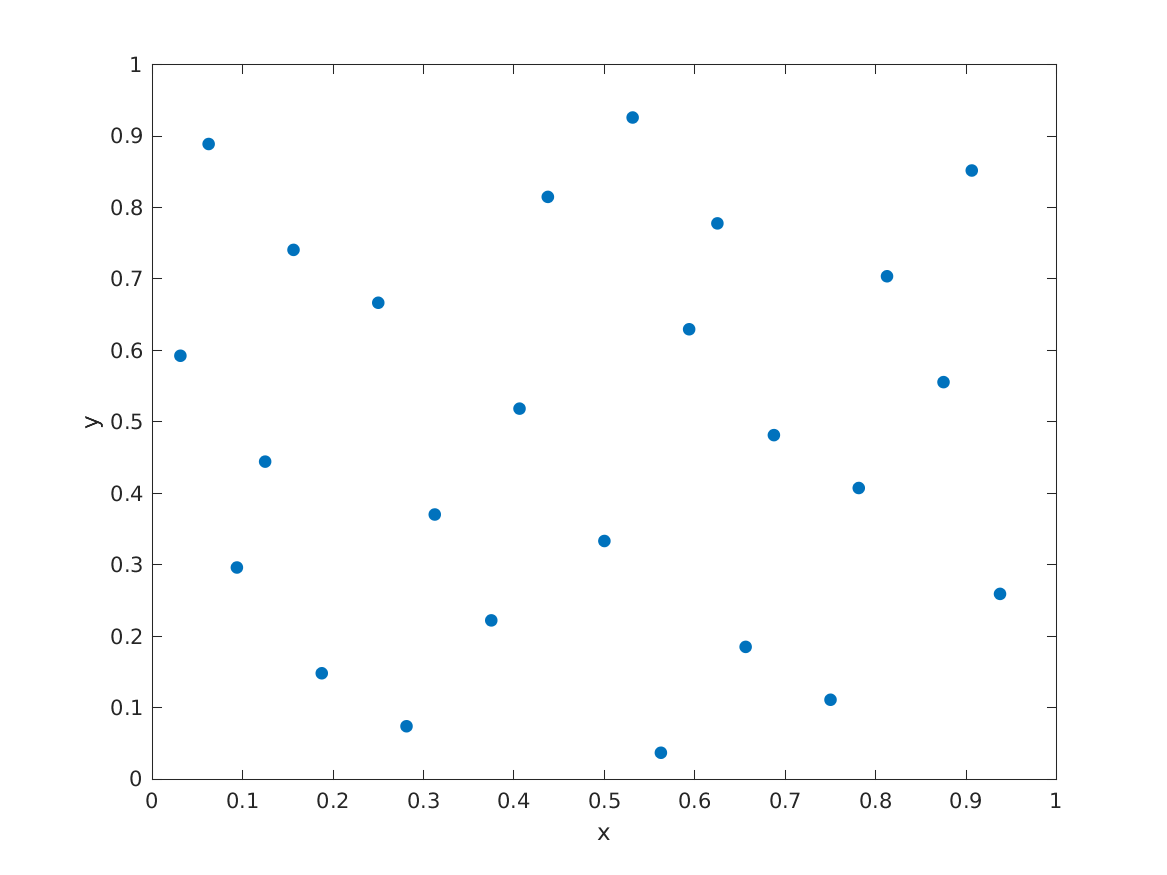}}
\subfigure[Cloud 2]{\includegraphics[width=0.45\textwidth]{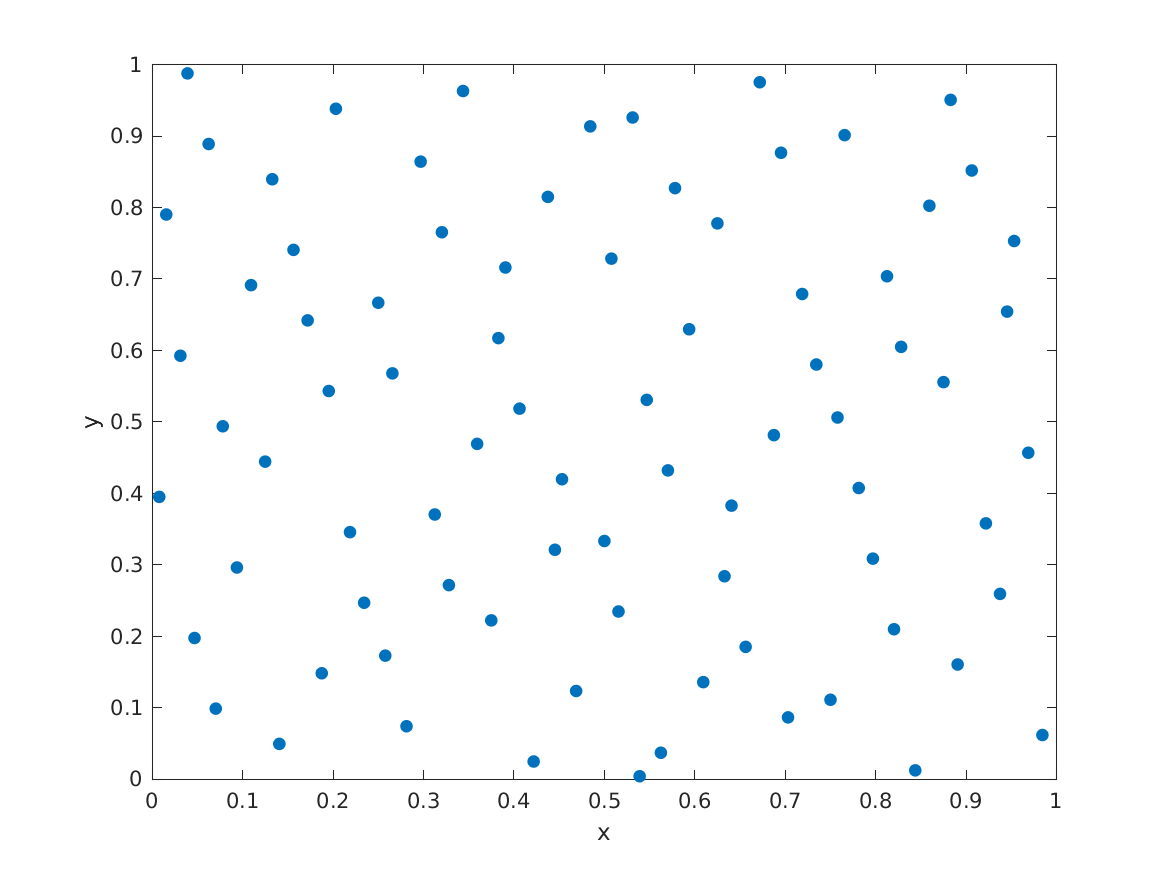}}
\subfigure[Cloud 3]{\includegraphics[width=0.45\textwidth]{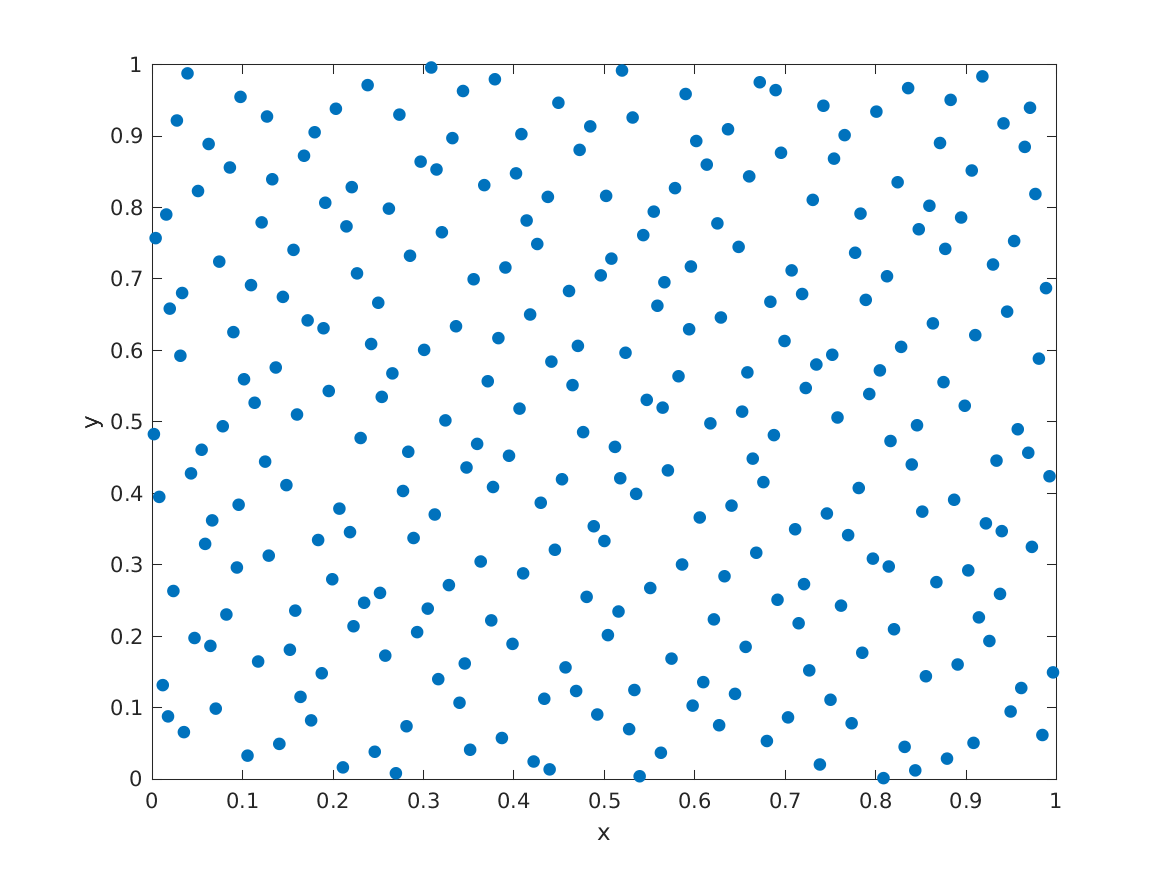}}
\end{center}
\caption{Irregular clouds of points.}
\label{fig:cloud1-2-3}
\end{figure}

\begin{figure}[H]
\begin{center}
\subfigure[Cloud 4]{\includegraphics[width=0.45\textwidth]{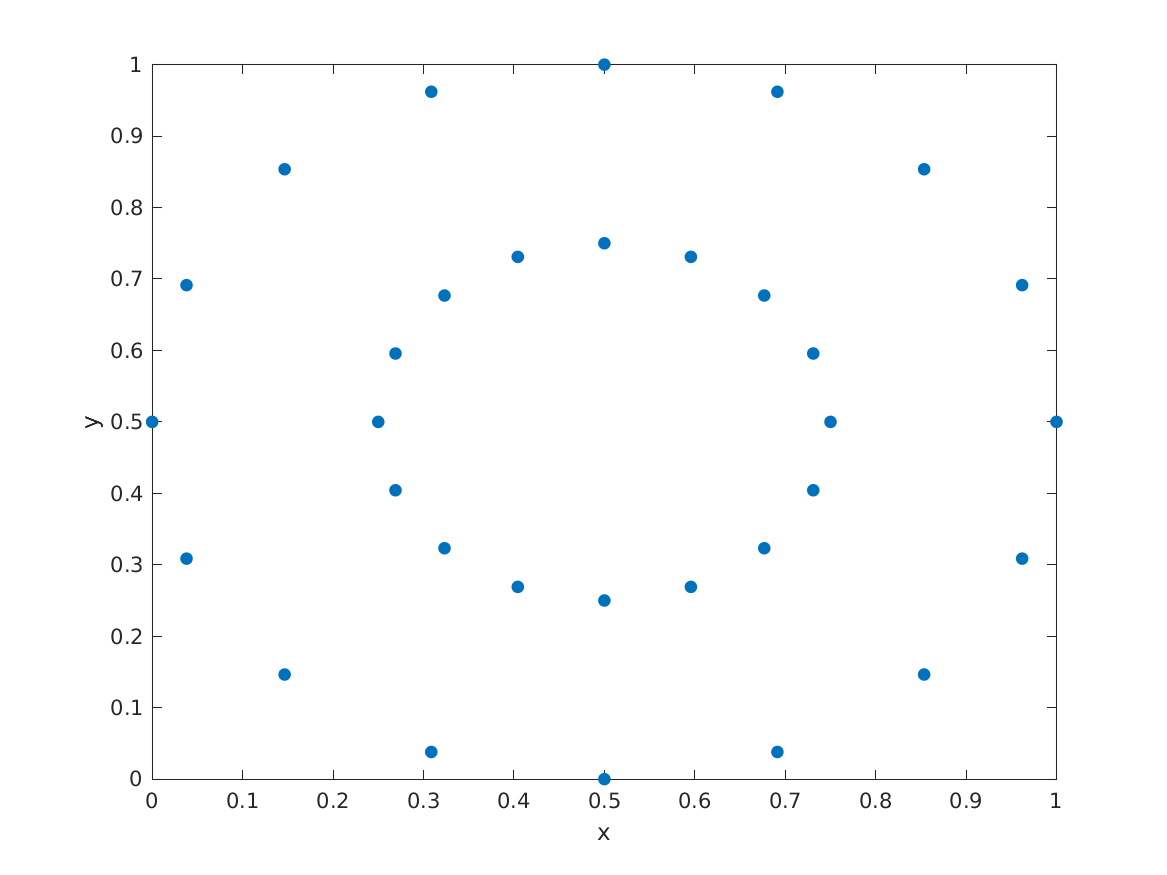}}
\subfigure[Cloud 5]{\includegraphics[width=0.45\textwidth]{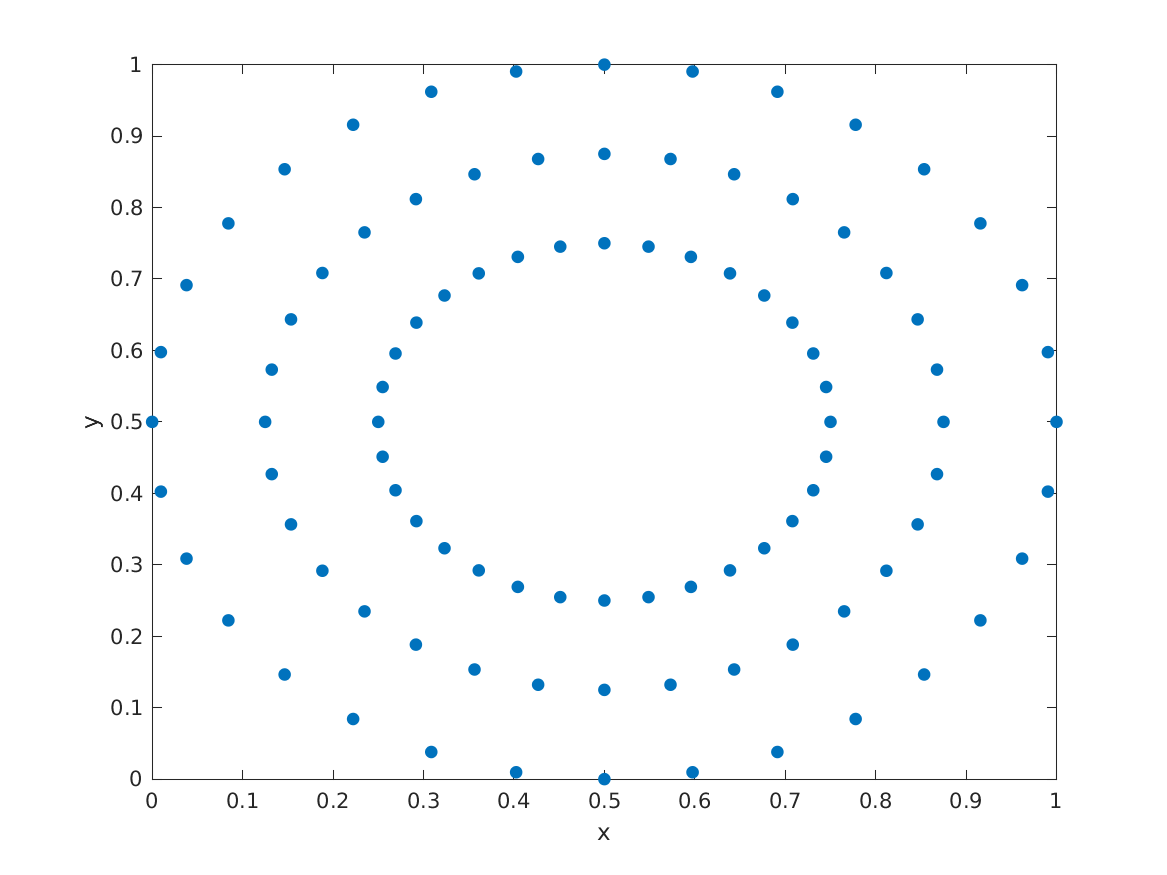}}
\subfigure[Cloud 6]{\includegraphics[width=0.45\textwidth]{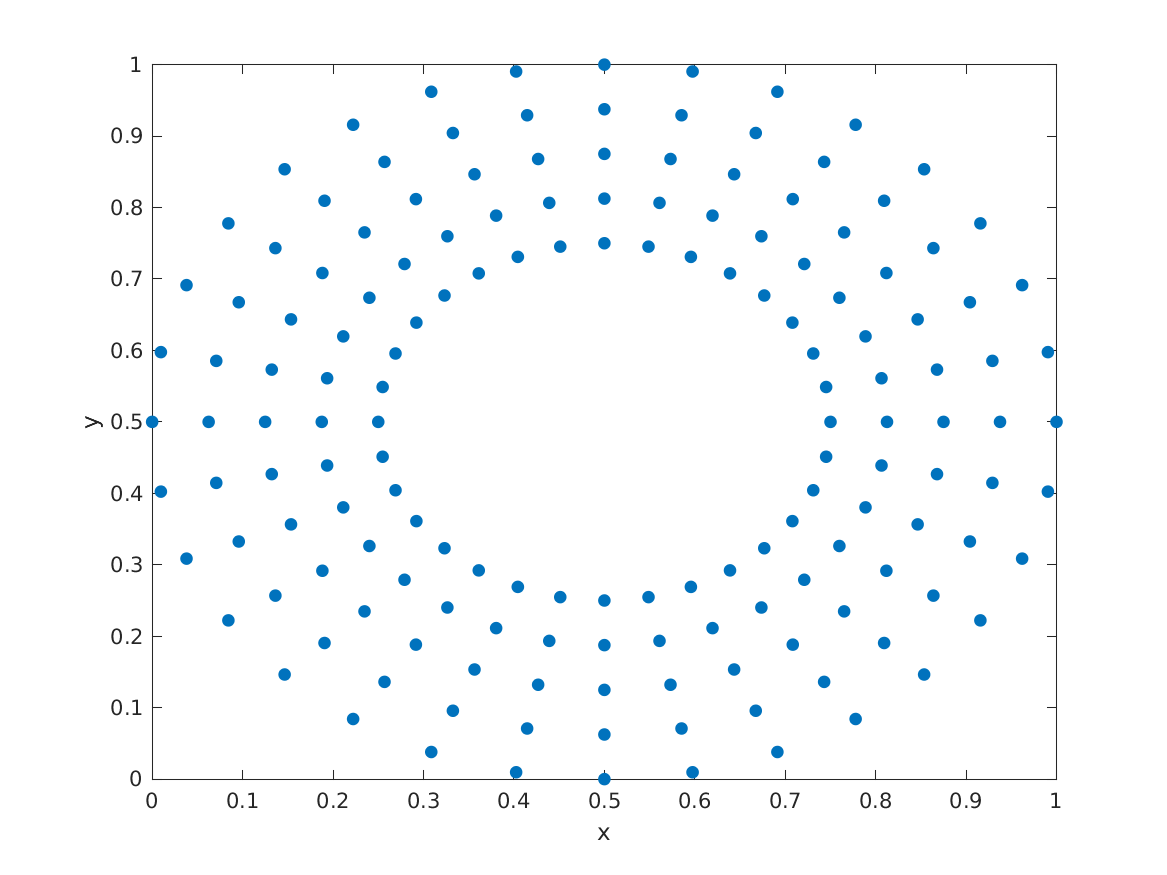}}
\end{center}
\caption{Irregular clouds of points.}
\label{fig:cloud4-5-6}
\end{figure}

\begin{table}[H]
\centering
  \caption{Convergence study for the stochastic diffusion equation for Figure \ref{fig:cloud1-2-3}.}\label{tab:error_123}
\begin{tabular}{c c  c} 
\hline
\# cloud of points  & $L ^2$-error & $L ^{\infty}$-error\\
\hline
Cloud 1  & 5.9423e-04&
1.5984e-03\\
Cloud 2   & 4.0679e-04&
9.7587e-04\\
Cloud 3  & 2.3432e-04&
7.7837e-04\\
\hline
\end{tabular}
\end{table}

\begin{table}[H]
\centering
  \caption{Convergence study for the stochastic diffusion equation for Figure \ref{fig:cloud4-5-6}.}\label{tab:error_456}
\begin{tabular}{c   c c} 
\hline
\# cloud of points   & $L ^2$-error & $L ^{\infty}$-error\\
\hline
Cloud 4   
&6.1363e-04  & 1.7569e-03\\
Cloud 5 
&   3.9460e-04& 1.1066e-03\\
Cloud 6 
& 3.3587e-04  &  8.7329e-04\\  
\hline
\end{tabular}
\end{table}

\begin{figure}[H]
\begin{center}
\subfigure[Mean solution]
{\includegraphics[width=0.45\textwidth]{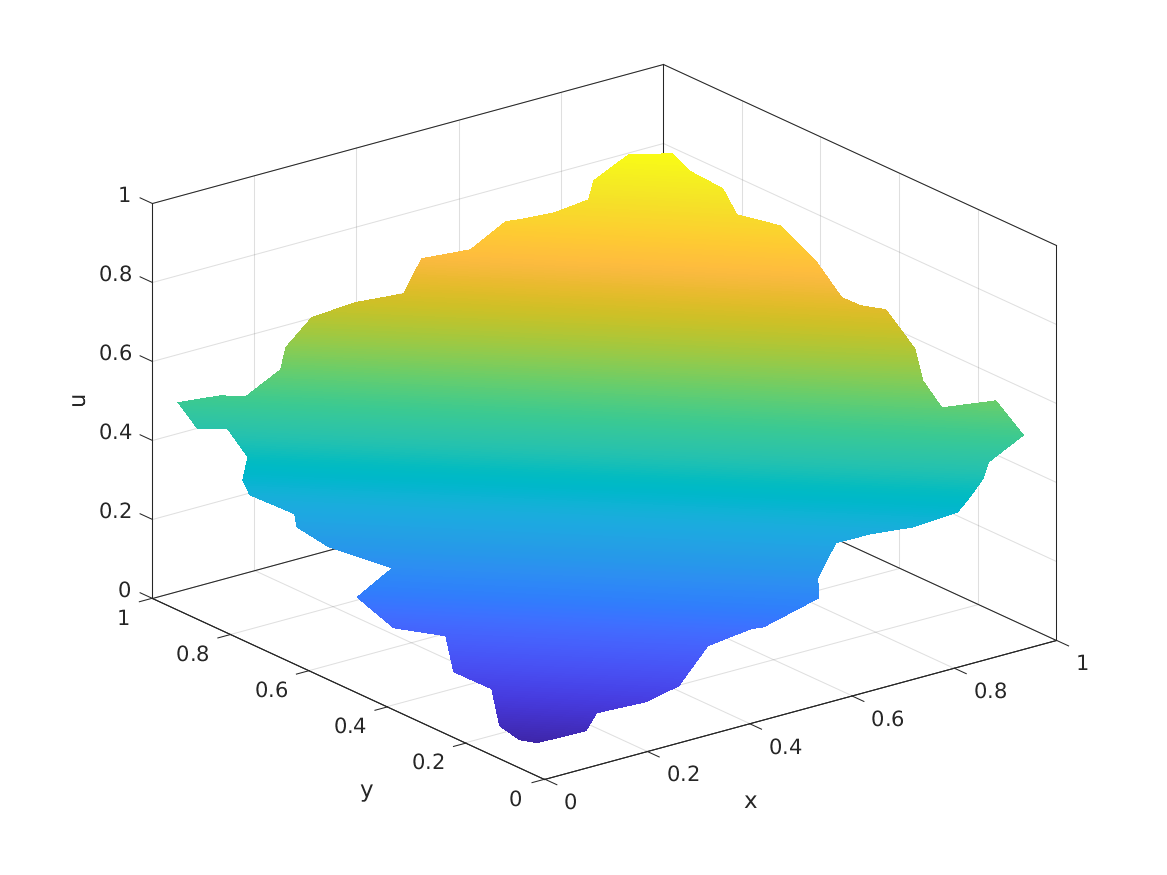}}
\subfigure[Analytical solution]
{\includegraphics[width=0.45\textwidth]{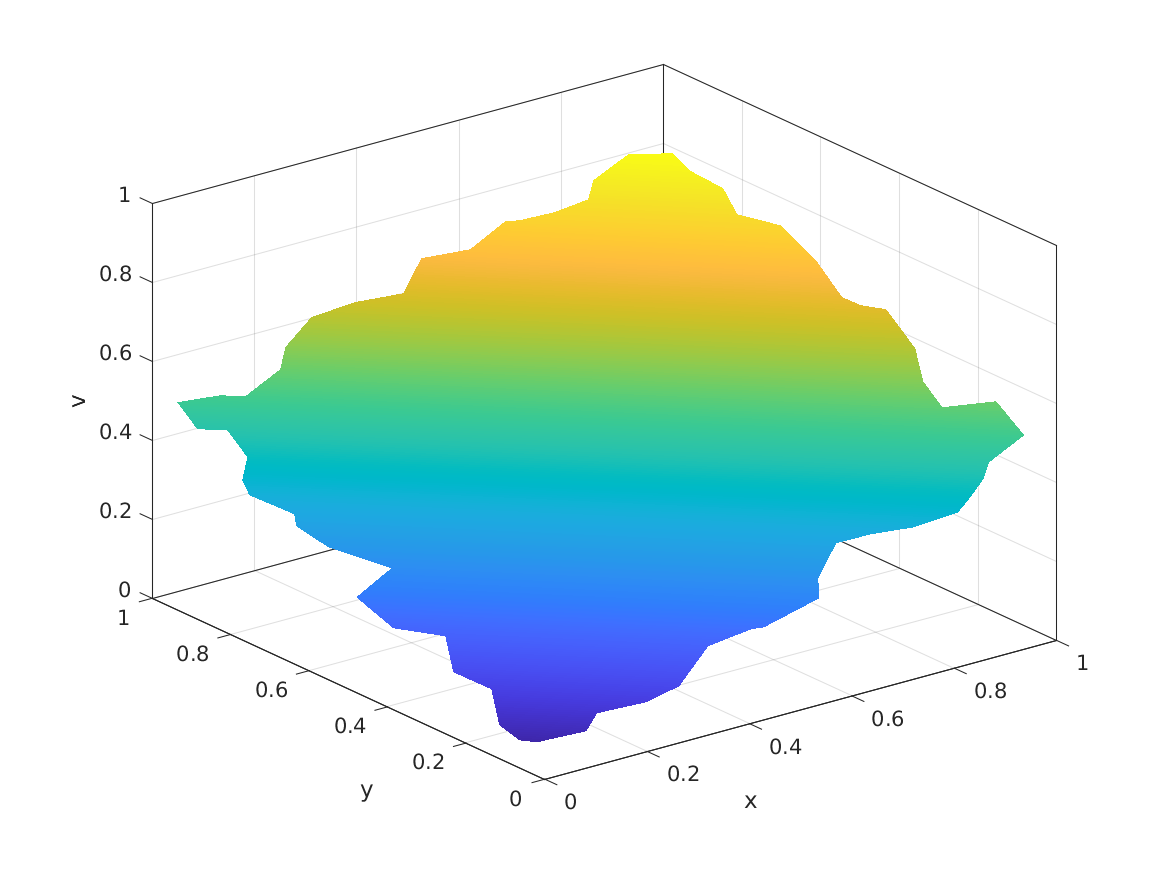}}
\end{center}
\caption{Plot of the mean solution and the analytical solution for the stochastic diffusion equation for cloud 3.}
\label{fig:sol_289}
\end{figure}

\begin{figure}[H]
\begin{center}
{\includegraphics[width=0.45\textwidth]{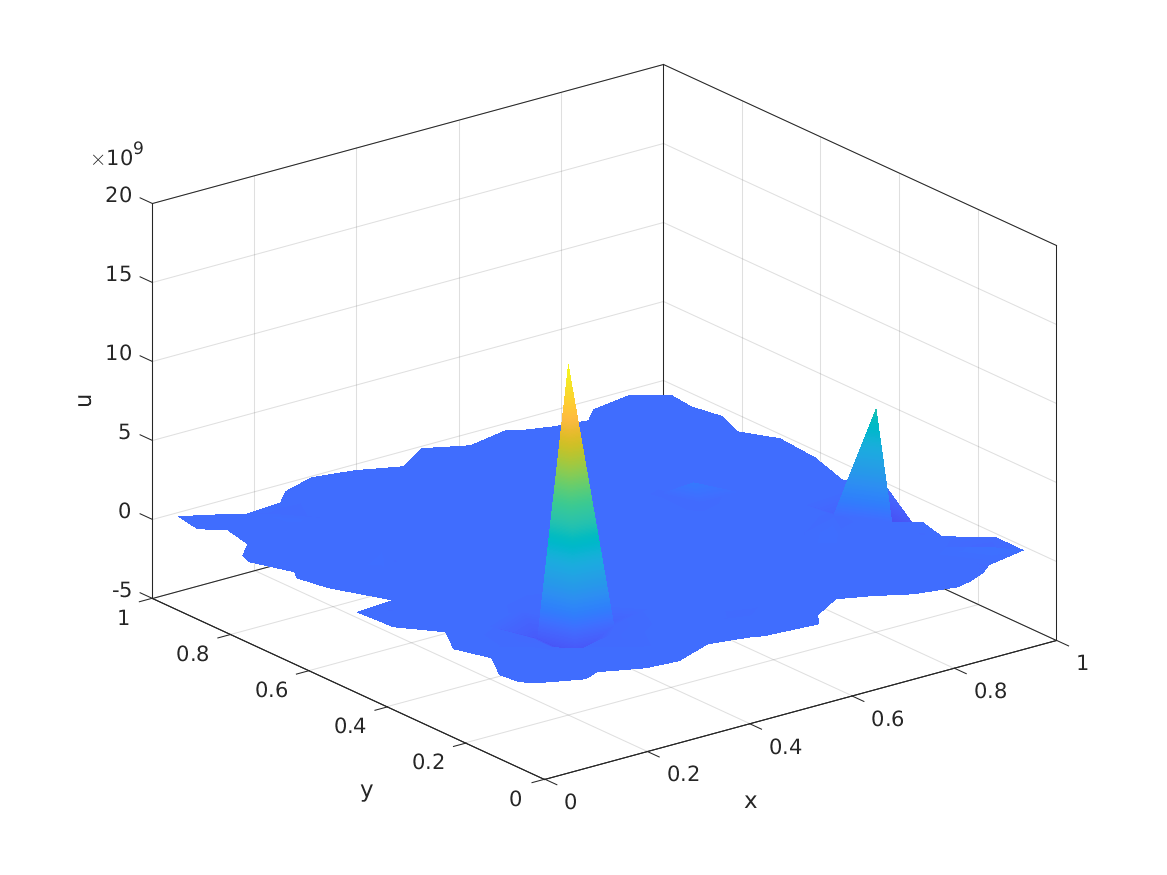}}
\end{center}
\caption{Plot of the mean solution for the stochastic diffusion equation for cloud 3 with $\rho=0.1$.}
\label{fig:unstable_289}
\end{figure}

\subsection{3D Numerical Tests}
Here, we consider the three-dimensional case. The problem is defined on $[0,1]\times [0,1] \times[0,1]$ for $T=1$. Let us consider \eqref{eq:main1} with an initial condition 
\begin{equation*}
v(\mathbf{x},0)=\sin(\pi x)
\sin(\pi y) \sin(\pi z),~\mathbf{x}\in \Omega, ~
\end{equation*}
and boundary condition
\begin{align*}
v(\mathbf{x},t)&=e^{-3\rho\pi ^2 t}\sin(\pi x)
\sin(\pi y) \sin(\pi z),~\mathbf{x}  \in \partial \Omega, ~t>0.    
\end{align*}
We set $\rho=1$ and $\mu=0.5$. In this example, we use  regular grids. The errors are shown in Table \ref{tab:error_3d}. We observe that the error decreases consistently under mesh refinement.

\begin{table}[H]
\centering
  \caption{Convergence study for the stochastic diffusion equation in three dimensions.}\label{tab:error_3d}
\begin{tabular}{c   c c} 
\hline
$N$  & $L ^2$-error & $L ^{\infty}$-error\\
\hline
64   
&1.5295e-03 & 1.7515e-02\\
125 
&9.1235e-04  &1.4346e-02 \\
216 
&  6.1913e-04 & 1.0207e-02\\  
\hline
\end{tabular}
\end{table}

\section{Conclusion}
\label{sec:conclusion}
In this paper, we have developed the generalized finite difference method  for approximating the solution of stochastic diffusion equations. Additionally, we were studied consistency, stability, and convergence analyses. The proposed approach is tested on one, two and three space dimensions and show promising results.

In future work, we will extend our investigation.
First, we will consider the general form of the SPDEs. One of key parameters of the method is the weight function. Furthermore, one can find the weights using similar strategy to the one proposed in 
in \cite{nn2024}.

\begin{appendices}
\section{}\label{sec:appendix1}
\subsection{Proof for Two-dimensional Case}
\label{sec:appendix2}
We provide a similar proof to that of Theorem \ref{theorem:consistency} for one-dimensional case.
For continuously differentiable function $\psi$, we can write
\begin{equation*}
\begin{split}
    \mathcal{L}\psi|_c ^k &= \psi (\mathbf{x}_c,(k+1)\Delta t)-\psi (\mathbf{x}_c,k\Delta t)-
    \rho \int_{t_{k}} ^{t_{k+1}} \nabla^2 \psi (\mathbf{x}_c ,l)\, dl\\
    &-\mu  \int_{t_{k}} ^{t_{k+1}} \psi(\mathbf{x}_c,l)dW(l).
    \end{split}
\end{equation*} 
We get the following relation for $\mathcal{L}_c ^k \psi$
\begin{equation*}
\begin{split}
 \mathcal{L}_c ^k \psi&= \psi (\mathbf{x}_c,(k+1)\Delta t)-\psi (\mathbf{x}_c,k\Delta t)- \rho \Delta t(-\theta_c \psi (\mathbf{x}_c,k\Delta t)+\sum_{i=1} ^M \theta_i \psi (\mathbf{x}_i,k\Delta t) )\\
 &- \mu \psi (\mathbf{x}_c,k\Delta t)(W((k+1)\Delta t)-W(k\Delta t)).  
 \end{split}
\end{equation*}
By taking the expected value of the square of  $\mathcal{L}\psi|_c ^k-   \mathcal{L}_c ^k \psi$, we get
\begin{equation*}
\begin{split}
\mathbb{E}&|\mathcal{L}\psi|_c ^k-   \mathcal{L}_c ^k \psi|^2 \\&\leq 2 \rho ^2 \mathbb{E}|
\int_{t_{k}} ^{t_{k+1}} (\nabla^2 \psi (\mathbf{x}_c ,l)-(-\theta_c \psi (\mathbf{x}_c,k\Delta t)+\sum_{i=1} ^M \theta_i \psi (\mathbf{x}_i,k\Delta t)))\, dl|^2\\&
+2 \mu ^2 \mathbb{E}|
\int_{t_{k}} ^{t_{k+1}} (\psi(\mathbf{x}_c,l)
-\psi (\mathbf{x}_c,k\Delta t))
dW(l)
|^2,
\end{split}
\end{equation*}
Reminding that $\psi$ is deterministic, we have
\begin{equation}
\label{eq:eq12}
\begin{split}
\mathbb{E}&|\mathcal{L}\psi|_c ^k-   \mathcal{L}_c ^k \psi|^2 \\&\leq 2 \rho ^2 \mathbb{E}|
\int_{t_{k}} ^{t_{k+1}} (\nabla^2 \psi (\mathbf{x}_c ,l)-(-\theta_c \psi (\mathbf{x}_c,k\Delta t)+\sum_{i=1} ^M \theta_i \psi (\mathbf{x}_i,k\Delta t)))\, dl|^2\\&
+2 \mu ^2 
\int_{t_{k}} ^{t_{k+1}} |\psi(\mathbf{x}_c,l)
-\psi (\mathbf{x}_c,k\Delta t)|^2
dl.
\end{split}
\end{equation}
As before, when $\Delta t \to 0$, the last term tends to zero.
Next, we prove that the first term tends to zero as $\delta \to 0$ and $\Delta t\to 0$. We can write 
\begin{equation}
\label{eq:eq11}
\begin{split}
 \nabla^2 \psi (\mathbf{x}_c ,k\Delta t)&-(-\theta_c \psi (\mathbf{x}_c,k\Delta t)+\sum_{i=1} ^M \theta_i \psi (\mathbf{x}_i,k\Delta t)) \\
 &= 
 \begin{bmatrix}
0 && 0 && 1 && 1 && 0
\end{bmatrix}
(\mathbf{d}^{(5)}-(H^{(5)})^{-1}\mathbf{f}^{(5)}).
\end{split}
\end{equation}
Suppose  that we use higher order derivatives in the Taylor series expansion. By  minimizing the norm $A^{(5)}(\psi)$ with respect to  the space variables, we  get  
\begin{equation*}
 H^{(5)}  \mathbf{d}^{(5)}=  \tilde{\mathbf{f}}^{(5)}.
\end{equation*}
By substituting $\mathbf{d}^{(5)}$ in
\eqref{eq:eq11}, we have
\begin{equation*}
\begin{split}
 \nabla^2 & \psi (\mathbf{x}_c ,k\Delta t)-(-\theta_c \psi (\mathbf{x}_c,k\Delta t)+\sum_{i=1} ^M \theta_i \psi (\mathbf{x}_i,k\Delta t)) \\
 &= 
 \begin{bmatrix}
0 && 0 && 1 && 1 && 0
\end{bmatrix}
(H^{(5)})^{-1}(\tilde{\mathbf{f}}^{(5)}-\mathbf{f}^{(5)})\\
&= 
 \begin{bmatrix}
0 && 0 && 1 && 1 && 0
\end{bmatrix}
(H^{(5)})^{-1}\\
&\times
\begin{bmatrix}
- \sum_{i=1}^M (\frac{1}{3!}(p_i \frac{\partial \psi_c}{\partial x}
+q_i
\frac{\partial \psi_c}{\partial y}
)^3
+\frac{1}{4!}(p_i \frac{\partial \psi_c}{\partial x}
+q_i
\frac{\partial \psi_c}{\partial y})^4+\cdots)p_i w_i^2\\
- \sum_{i=1}^M (\frac{1}{3!}(p_i \frac{\partial \psi_c}{\partial x}
+q_i
\frac{\partial \psi_c}{\partial y}
)^3
+\frac{1}{4!}(p_i \frac{\partial \psi_c}{\partial x}
+q_i
\frac{\partial \psi_c}{\partial y})^4+\cdots)
q_i w_i^2\\
- \sum_{i=1}^M (\frac{1}{3!}(p_i \frac{\partial \psi_c}{\partial x}
+q_i
\frac{\partial \psi_c}{\partial y}
)^3
+\frac{1}{4!}(p_i \frac{\partial \psi_c}{\partial x}
+q_i
\frac{\partial \psi_c}{\partial y})^4+\cdots)
\frac{p_i^2}{2} w_i^2\\
- \sum_{i=1}^M (\frac{1}{3!}(p_i \frac{\partial \psi_c}{\partial x}
+q_i
\frac{\partial \psi_c}{\partial y}
)^3
+\frac{1}{4!}(p_i \frac{\partial \psi_c}{\partial x}
+q_i
\frac{\partial \psi_c}{\partial y})^4+\cdots)
\frac{q_i^2}{2} w_i^2\\
- \sum_{i=1}^M (\frac{1}{3!}(p_i \frac{\partial \psi_c}{\partial x}
+q_i
\frac{\partial \psi_c}{\partial y}
)^3
+\frac{1}{4!}(p_i \frac{\partial \psi_c}{\partial x}
+q_i
\frac{\partial \psi_c}{\partial y})^4+\cdots)
p_i q_i w_i^2
\end{bmatrix}\\
&=
-
\sum_{i=1}^M  
\frac{1}{3!}(p_i \frac{\partial \psi_c}{\partial x}
+q_i
\frac{\partial \psi_c}{\partial y}
)^3
\theta_i
-
\sum_{i=1}^M  \frac{1}{4!}(p_i \frac{\partial \psi_c}{\partial x}
+q_i
\frac{\partial \psi_c}{\partial y})^4 \theta_i -\cdots.
\end{split}
\end{equation*}
Therefore, as $\delta\to 0$ and $\Delta t\to 0$, the
first term in \eqref{eq:eq12} tends to zero.

\subsection{Proof for Three-dimensional Case}
The proof follows the same steps as in proof \ref{sec:appendix2}.
However, when we want to prove that the first term in \eqref{eq:eq12} tends to zero as $\delta \to 0, \Delta t\to 0$, we proceed as follows
\begin{equation}
\label{eq:eq13}
\begin{split}
 \nabla^2 \psi (\mathbf{x}_c ,k\Delta t)&-(-\theta_c \psi (\mathbf{x}_c,k\Delta t)+\sum_{i=1} ^M \theta_i \psi (\mathbf{x}_i,k\Delta t)) \\
 &= 
 \begin{bmatrix}
0 & 0 & 0 & 1 & 1 &1 & 0 & 0 & 0
\end{bmatrix}
(\mathbf{d}^{(9)}-(H^{(9)})^{-1}\mathbf{f}^{(9)}).
\end{split}
\end{equation}
Suppose  that we use higher order derivatives in the Taylor series expansion. By  minimizing the norm $A^{(9)}(\psi)$ with respect to  the space variables, we  get  
\begin{equation*}
 H^{(9)}  \mathbf{d}^{(9)}=  \tilde{\mathbf{f}}^{(9)}.
\end{equation*}
By substituting $\mathbf{d}^{(9)}$ in
\eqref{eq:eq13}, we have
{\small
\begin{equation*}
\begin{split}
 \nabla^2 & \psi (\mathbf{x}_c ,k\Delta t)-(-\theta_c \psi (\mathbf{x}_c,k\Delta t)+\sum_{i=1} ^M \theta_i \psi (\mathbf{x}_i,k\Delta t)) \\
 &= 
 \begin{bmatrix}
0 & 0 & 0 & 1 & 1 &1 & 0 & 0 & 0
\end{bmatrix}
(H^{(9)})^{-1}(\tilde{\mathbf{f}}^{(9)}-\mathbf{f}^{(9)})\\
&= 
 \begin{bmatrix}
0 & 0 & 0 & 1 & 1 &1 & 0 & 0 & 0
\end{bmatrix}
(H^{(9)})^{-1}\\
&\times
\begin{bmatrix}
- \sum_{i=1}^M (\frac{1}{3!}(p_i \frac{\partial \psi_c}{\partial x}
+q_i
\frac{\partial \psi_c}{\partial y}
+r_i
\frac{\partial \psi_c}{\partial z})^3
+\frac{1}{4!}(p_i \frac{\partial \psi_c}{\partial x}
+q_i
\frac{\partial \psi_c}{\partial y}
+r_i
\frac{\partial \psi_c}{\partial z}
)^4+\cdots)p_i w_i^2\\
- \sum_{i=1}^M (\frac{1}{3!}(p_i \frac{\partial \psi_c}{\partial x}
+q_i
\frac{\partial \psi_c}{\partial y}
+r_i
\frac{\partial \psi_c}{\partial z})^3
+\frac{1}{4!}(p_i \frac{\partial \psi_c}{\partial x}
+q_i
\frac{\partial \psi_c}{\partial y}
+r_i
\frac{\partial \psi_c}{\partial z}
)^4+\cdots)q_i w_i^2\\
- \sum_{i=1}^M (\frac{1}{3!}(p_i \frac{\partial \psi_c}{\partial x}
+q_i
\frac{\partial \psi_c}{\partial y}
+r_i
\frac{\partial \psi_c}{\partial z})^3
+\frac{1}{4!}(p_i \frac{\partial \psi_c}{\partial x}
+q_i
\frac{\partial \psi_c}{\partial y}
+r_i
\frac{\partial \psi_c}{\partial z}
)^4+\cdots)r_i w_i^2\\
- \sum_{i=1}^M (\frac{1}{3!}(p_i \frac{\partial \psi_c}{\partial x}
+q_i
\frac{\partial \psi_c}{\partial y}
+r_i
\frac{\partial \psi_c}{\partial z})^3
+\frac{1}{4!}(p_i \frac{\partial \psi_c}{\partial x}
+q_i
\frac{\partial \psi_c}{\partial y}
+r_i
\frac{\partial \psi_c}{\partial z}
)^4+\cdots)
\frac{p_i^2}{2} w_i^2\\
- \sum_{i=1}^M (\frac{1}{3!}(p_i \frac{\partial \psi_c}{\partial x}
+q_i
\frac{\partial \psi_c}{\partial y}
+r_i
\frac{\partial \psi_c}{\partial z})^3
+\frac{1}{4!}(p_i \frac{\partial \psi_c}{\partial x}
+q_i
\frac{\partial \psi_c}{\partial y}
+r_i
\frac{\partial \psi_c}{\partial z}
)^4+\cdots)
\frac{q_i^2}{2} w_i^2\\
- \sum_{i=1}^M (\frac{1}{3!}(p_i \frac{\partial \psi_c}{\partial x}
+q_i
\frac{\partial \psi_c}{\partial y}
+r_i
\frac{\partial \psi_c}{\partial z})^3
+\frac{1}{4!}(p_i \frac{\partial \psi_c}{\partial x}
+q_i
\frac{\partial \psi_c}{\partial y}
+r_i
\frac{\partial \psi_c}{\partial z}
)^4+\cdots)
\frac{r_i^2}{2} w_i^2\\
- \sum_{i=1}^M (\frac{1}{3!}(p_i \frac{\partial \psi_c}{\partial x}
+q_i
\frac{\partial \psi_c}{\partial y}
+r_i
\frac{\partial \psi_c}{\partial z})^3
+\frac{1}{4!}(p_i \frac{\partial \psi_c}{\partial x}
+q_i
\frac{\partial \psi_c}{\partial y}
+r_i
\frac{\partial \psi_c}{\partial z}
)^4+\cdots)
p_i q_i w_i^2\\
- \sum_{i=1}^M (\frac{1}{3!}(p_i \frac{\partial \psi_c}{\partial x}
+q_i
\frac{\partial \psi_c}{\partial y}
+r_i
\frac{\partial \psi_c}{\partial z})^3
+\frac{1}{4!}(p_i \frac{\partial \psi_c}{\partial x}
+q_i
\frac{\partial \psi_c}{\partial y}
+r_i
\frac{\partial \psi_c}{\partial z}
)^4+\cdots)
p_i r_i w_i^2\\
- \sum_{i=1}^M (\frac{1}{3!}(p_i \frac{\partial \psi_c}{\partial x}
+q_i
\frac{\partial \psi_c}{\partial y}
+r_i
\frac{\partial \psi_c}{\partial z})^3
+\frac{1}{4!}(p_i \frac{\partial \psi_c}{\partial x}
+q_i
\frac{\partial \psi_c}{\partial y}
+r_i
\frac{\partial \psi_c}{\partial z}
)^4+\cdots)
q_i r_i w_i^2
\end{bmatrix}\\
&=
-
\sum_{i=1}^M  
\frac{1}{3!}(p_i \frac{\partial \psi_c}{\partial x}
+q_i
\frac{\partial \psi_c}{\partial y}
+r_i
\frac{\partial \psi_c}{\partial z}
)^3
\theta_i
-
\sum_{i=1}^M  \frac{1}{4!}(p_i \frac{\partial \psi_c}{\partial x}
+q_i
\frac{\partial \psi_c}{\partial y}
+r_i
\frac{\partial \psi_c}{\partial z}
)^4 \theta_i -\cdots.
\end{split}
\end{equation*}
}
We can conclude that as $\delta\to 0$ and $\Delta t\to 0$, the
first term in \eqref{eq:eq12} tends to zero. This concludes the proof.
\end{appendices}

\bibliography{sn-bibliography}

\end{document}